\documentclass[10pt, reqno]{amsart}
\usepackage[T1]{fontenc}

\usepackage[backend=biber, style=numeric, isbn=false, doi=false]{biblatex}
\usepackage{indentfirst}
\usepackage{amstext}
\usepackage{amsopn}
\usepackage{amsfonts}
\usepackage{amsmath}
\usepackage{latexsym}
\usepackage{amscd}
\usepackage{amssymb}
\usepackage{amsmath}
\usepackage[all,cmtip]{xy}
\usepackage{leftidx}
\usepackage{microtype}
\usepackage{colonequals} 
\usepackage{enumitem}
\usepackage{graphicx}
\usepackage[colorlinks=true,pagebackref=false,linktocpage=true]{hyperref}
\usepackage{tikz}
\usepackage{tikz-cd}
\usepackage[english]{babel}
\usepackage{csquotes}
\usepackage{adjustbox}

=\oddsidemargin \font\teneufm=eufm10 \font\seveneufm=eufm7
\font\fiveeufm=eufm5
\newfam\eufmfam
\textfont\eufmfam=\teneufm \scriptfont\eufmfam=\seveneufm
\scriptscriptfont\eufmfam=\fiveeufm

\newtheorem{stheorem}{Theorem}[]
\newtheorem{ques}[stheorem]{Question}
\newtheorem{prop}[stheorem]{Proposition}
\newtheorem{lemm}[stheorem]{Lemma}
\newtheorem{theo}[stheorem]{Theorem}

\newtheoremstyle{subsection-tweak}
   {11pt}
   {3pt}%
   {}
   {}%
   {\bfseries}
   {}%
   {.5em}
   {\thmnumber{\@{#1}{}\@{#2}.}%
    \thmnote{~{\bfseries#3.}}}    

\theoremstyle{subsection-tweak}
\newtheorem{pp}[stheorem]{}

\theoremstyle{definition}
\newtheorem{rema}[stheorem]{Remark}

\makeatletter
\renewcommand{\subsection}{\@startsection{subsection}{2}{\z@}%
  {-4.25ex \@plus -1.5ex \@minus -.2ex}
  {2.25ex \@plus .5ex}
  {\normalfont\large\itshape}}
\makeatother

\newcommand{\GG}{\mathbb{G}}
\newcommand{\FF}{\mathbb{F}}
\newcommand{\NN}{\mathbb{N}}
\newcommand{\ZZ}{\mathbb{Z}}
\newcommand{\QQ}{\mathbb{Q}}
\newcommand{\RR}{\mathbb{R}}
\newcommand{\CC}{\mathbb{C}}
\newcommand{\Ker}{\mathrm{ker}}
\newcommand{\coker}{\mathrm{coker}}
\newcommand{\Ima}{\mathrm{Im}}
\newcommand{\id}{\mathrm{id}}
\newcommand{\GL}{\mathrm{GL}}
\newcommand{\Gal}{\mathrm{Gal}}
\newcommand{\Br}{\mathrm{Br}}

\newcommand{\Oc}{\mathcal{O}}

\newcommand{\Lh}{\widehat{L}}
\newcommand{\Kh}{\widehat{K}}
\newcommand{\Khnr}{\Kh^\mathrm{unr}}
\newcommand{\Rh}{\widehat{R}}

\newcommand{\Galnr}{\Gamma^\mathrm{unr}}

\begin{document}

\title[Answer to a decomposition question on tori]{Answer to a decomposition question on tori raised by Colliot-Thélène and Sansuc}

\author[A.\ Zidani]{Anis Zidani}
\address{Institut de Mathématiques de Jussieu-Paris Rive Gauche, 
Sorbonne Université, Campus Pierre et Marie Curie,
4, place Jussieu, 75252 Paris Cedex 05, France}

\date{\today}

\begin{abstract} 
    The aim of this note is to present a simple strategy to answer negatively a decomposition question on tori posed by Colliot-Thélène and Sansuc in the article \textit{Principal Homogeneous Spaces under Flasque Tori: Applications} of 1987. We then deduce a torus $T$ over $\QQ$ and a prime number $p$ such that $T(\ZZ_p)\,T(\QQ)\not=T(\QQ_p)$, where $T(\ZZ_p)$ denotes the maximal compact subgroup of $T(\QQ_p)$.
\end{abstract}

\maketitle

\noindent{\bf Keywords:} Algebraic tori, Weak approximation, Kottwitz morphism.

\medskip

\noindent{\bf MSC: 11E72, 14G25, 20G30.}

\bigskip

\tableofcontents

\section*{Introduction}

The purpose of this note is to answer the following question:

\begin{ques}[{\cite[Remark 8.3]{CTS}}]\label{QuestionCT}
    Let $K$ be a global field, $\Kh$ a completion of $K$ with respect to a discrete valuation, and $\Rh$ the associated discrete valuation ring. Consider a torus $T$ over $K$. Let $T(\Rh)$ denote the maximal compact subgroup of $T(\Kh)$. Is the equality
    $$T(\Rh)\, T(K) = T(\Kh)$$
    satisfied?
\end{ques}

\begin{pp}[{Context}]
Question~\ref{QuestionCT} was first asked in 1987 by Bruhat and Tits to Colliot-Thélène and Sansuc. In that same paper, they prove the case where $T$ is an unramified $K$-torus using the theory of flasque resolutions (see~\cite[Proposition 8.1.(i)]{CTS}). 

Later, in 2007 in \cite{CTSuresh}, Colliot-Thélène and Suresh studied variants of the problem: the case where $R$ is a semi-local ring (thus involving several valuations to be considered simultaneously), and the case of a decomposition ``$RT(\Kh)\,T(\Rh) = T(\Kh)$'', where $RT(\Kh)$ is the subgroup of $R$-trivial elements of $T(\Kh)$ (see~\cite[7.1.~Définition]{BourbakiGille}). They succeeded in finding counterexamples for both situations. They also provided a list of cases where the decomposition ``$RT(\Kh)\,T(\Rh) = T(\Kh)$'' holds (see~\cite[Proposition 2.1]{CTSuresh}). However, even after this work, Question~\ref{QuestionCT} remained open. 
\end{pp}

In this note, we provide a negative answer to Question~\ref{QuestionCT} (see~Propositions~\ref{ContreExempleCorpsDeNombre} and \ref{ContreExempleCorpsDeFonction}). More generally, we develop a systematic strategy to construct tori that do not satisfy the required decomposition. The crucial ingredient for this is the Kottwitz morphism, which had not been used until now for this type of problem.

\begin{pp}[{The Kottwitz morphism}]\label{paragrapheKottiwtz}
Suppose this time that $\Kh$ denotes a discretely valued Henselian field with valuation $v$ and whose residue field is perfect. The Kottwitz morphism can be viewed as a functorial generalization of the valuation morphism
$$\begin{aligned}
    \kappa_{\GG_m} \colon \Kh^\times & \longrightarrow \ZZ \\
     k & \longmapsto v(k)
\end{aligned}$$

to any reductive group $G$ over $\Kh$. It was defined by Kottwitz in \cite[Section 7]{Kottwitz}.
\medskip

For example, the Kottwitz morphism for $\GL_n$ over $\Kh$ is:
$$\begin{aligned}
    \kappa_{\GL_n} \colon \GL_n(\Kh) & \longrightarrow \ZZ \\
     M & \longmapsto v(\mathrm{det}(M)),
\end{aligned}$$

while for $\mathrm{PGL}_n$ over $\Kh$, we have:
$$\begin{aligned}
    \kappa_{\mathrm{PGL}_n}\colon \mathrm{PGL}_n(\Kh) & \longrightarrow \ZZ / n \ZZ  \\
     \overline{M} & \longmapsto v(\mathrm{det}(\overline{M})) \pmod n .
\end{aligned}$$

The morphism is defined using the algebraic fundamental group (see~\cite[Definition 11.3.2]{kaletha_prasad_2023}), denoted by $\pi_1(G)$. It is a $\Gal(\Kh^\mathrm{sep}/\Kh)$-module (where $\Kh^\mathrm{sep}$ is the separable closure of $\Kh$). \linebreak For a $\Kh$-torus $T$, we have $\pi_1(T)=T_*$, the Galois module of cocharacters of $T$.
\medskip

Consider $\Khnr$, the maximal unramified extension of $\Kh$. Let $\Galnr\colonequals \Gal( \Khnr/\Kh)$ and ${I\colonequals \Gal(\Kh^\mathrm{sep}/\Khnr)}$. Consider a reductive group $G$ over $\Kh$. It turns out that the Kottwitz morphism of $G$ over $\Khnr$ is a morphism of $\Galnr$-modules: $\kappa_{G_{\Khnr}}:G(\Khnr)\rightarrow \pi_1(G)_I$.
\medskip

The Kottwitz morphism of $G$ over $\Kh$ is then the morphism $\kappa_G:G(\Kh)\rightarrow (\pi_1(G)_I)^{\Galnr}$ obtained from $\kappa_{G_{\Khnr}}$ by taking the $\Galnr$-invariants. It enjoys many properties:

\begin{itemize}
    \item It is surjective when $\Kh$ is a local field (see~\cite[Corollary 11.7.6]{kaletha_prasad_2023}).
    \item It is trivial for any simply connected semisimple group (since its $\pi_1$ is trivial).
    \item Its kernel is $G(\Kh)^0$, the open subgroup generated by the parahoric subgroups of $G$ (see~\cite[Proposition 11.5.4]{kaletha_prasad_2023}).
    \item The preimage of the torsion part of $(\pi_1(G)_I)^{\Galnr}$ is $G(\Kh)^1$, the intersection of the kernels of $g\mapsto v(\chi(g))$ for $\chi$ running through the characters of $G$ (see~\cite[Lemma 11.5.2]{kaletha_prasad_2023}).
\end{itemize}

Furthermore, for a $\Kh$-torus $T$, it turns out that $T(\Kh)^0$ is the subgroup of points of the connected Néron model of $T$ (see~\cite[Corollary B.8.7]{kaletha_prasad_2023}), and $T(\Kh)^1$ is the maximal bounded subgroup of $T(\Kh)$.
\medskip
\end{pp}

\begin{pp}[{Strategy and outline of the paper}]
Our strategy to answer Question~\ref{QuestionCT} consists of two steps.
In Section~\ref{SectionLemmesKottwitz}, we prove that for a reductive group $G$ over $\Kh$ (which is now a local field) split by an extension whose wild inertia subgroup is cyclic, the map ${G(\Kh)/\overline{G(K)}\rightarrow G(\Kh)/G(K)\,G(\Kh)^0}$ is an isomorphism (see Proposition~\ref{ThmCorpsNbr}). This is proven using the Kottwitz morphism. Next, in Section~\ref{SectionCEX}, we find a torus $T$ over $K$ satisfying the previous hypothesis (in fact, split by a tamely ramified extension) such that ${T(\Kh)^1=T(\Kh)^0}$ and ${\overline{T(K)}\not = T(\Kh)}$ (meaning that $T$ does not satisfy weak approximation). Weak approximation over number fields is very well understood thanks to the work of Sansuc (see~\cite{Sansuc}). In \cite{borovoi2025defectweakapproximationreductive}, Borovoi provides a version including function fields.
We then deduce in Propositions~\ref{ContreExempleCorpsDeNombre} and \ref{ContreExempleCorpsDeFonction} a negative answer to Question~\ref{QuestionCT} (and even a negative answer to an analogous question in the case of a semisimple group). It is worth noting that this strategy also works for studying the semi-local variant of the problem.

In addition, we discuss some further problems surrounding the Kottwitz morphism in Sections~\ref{SectionCEXSurjectivite} and \ref{SectionBirat} of this article.
On the one hand, in Section~\ref{SectionCEXSurjectivite}, we give two examples of tori over $\RR(\!(t)\!)$ whose Kottwitz morphisms are not surjective. Thus, surjectivity is only achieved for specific fields. On the other hand, we show in Section~\ref{SectionBirat} that, for a $K$-torus $T$, the groups ${\coker(\kappa_{T_{\Kh}})}$, ${\coker(T(K)\rightarrow ((T_*)_I)^{\Galnr})}$, and ${T(\Kh)/T(K)\, T(\Kh)^0}$ are stably birational invariants of $T$.
\end{pp}

\section*{Notations}

Let $K$ be a field. We denote:
\begin{itemize}
    \item $\Omega_K$ the set of places of $K$, $\Sigma$ a finite subset of $\Omega_K$, and $\overline{\Sigma}\colonequals \Omega_K \backslash \Sigma$,
    \item $K_v$ the completion of $K$ at a place $v\in \Sigma$ and $K_\Sigma\colonequals  \prod_{v\in \Sigma}K_v$ (with $K$ diagonally embedded),
    \item $K_v^{\mathrm{unr}}$ the maximal unramified extension of $K_v$ and $K_\Sigma^\mathrm{unr}\colonequals \prod_v K_v^\mathrm{unr}$,
    \item $\Galnr_v\colonequals  \Gal(K_v^\mathrm{unr}/K_v)$, $I_v$ the inertia subgroup of $K_v$, $\Galnr\colonequals \prod_v \Galnr_v$, and $I\colonequals \prod_v I_v$.
\end{itemize}
We assume that, for every $v\in \Sigma$, the residue field of $K_v$ is \emph{perfect}. The notations, properties, and definitions introduced in this note for a complete field generalize to the case of $K_\Sigma$ by working factor by factor. This is implicit throughout the document.

We also adopt the notations $(-)^\mathrm{sep}$, $\pi_1(-)$, $(-)_*$, $\kappa_{(-)}$, $(-)^0$, and $(-)^1$ from $\S$\ref{paragrapheKottiwtz}.
\medskip

In Sections~\ref{SectionLemmesKottwitz} and \ref{SectionCEX}, the field $K$ is assumed to be \emph{global}.

\section{A few lemmas on the Kottwitz morphism}\label{SectionLemmesKottwitz}

The purpose of this section is to establish several results crucial for developing our answer to Question~\ref{QuestionCT}, which involve the Kottwitz morphism. In particular, we show that under certain conditions, the problem reduces to a question of weak approximation.

\begin{pp}[{Cyclic wild inertia}] 
A reductive group over $K_\Sigma$ is said to have \emph{cyclic wild inertia} if for every $v\in \Sigma$, there exists a Galois splitting extension of $K_v$ such that its wild inertia subgroup is cyclic (this is in particular the case if the extension is tamely ramified). We observe that this is equivalent to requiring the inertia subgroup to be a metacyclic group (that is, one whose Sylow subgroups are cyclic).
\end{pp}

\begin{lemm}\label{FactDirectTore}
    Let $T$ be a torus over $K_\Sigma$ which is a direct factor of a quasi-trivial torus over $K_\Sigma^\mathrm{unr}$. \linebreak Then $\pi_1(T)_I$ is free and $T(K_\Sigma)^0=T(K_\Sigma)^1$.
\end{lemm}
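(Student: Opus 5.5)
We work one place $v\in\Sigma$ at a time, so fix a complete discretely valued field $K_v$ with perfect residue field, maximal unramified extension $K_v^{\mathrm{unr}}$ and inertia group $I=I_v$. The plan is first to show that $(T_*)_I$ is torsion-free, and then to read off the equality of subgroups from the description of $\kappa_T$ recalled in \S\ref{paragrapheKottiwtz}.

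\textbf{Step 1: $(T_*)_I$ is free.} By hypothesis there is a torus $T'$ over $K_v^{\mathrm{unr}}$ such that $T\times T'$ is isomorphic over $K_v^{\mathrm{unr}}$ to a quasi-trivial torus $Q=\prod_i R_{L_i/K_v^{\mathrm{unr}}}\GG_m$. On cocharacter modules this gives an isomorphism of $I$-modules
\[
T_*\oplus T'_*\cong Q_*\cong\bigoplus_i \ZZ[I/I_i],
\]
where $I_i\subset I$ is the open subgroup fixing $L_i$. Since taking coinvariants commutes with direct sums, $(T_*)_I$ is a direct summand of $(Q_*)_I$. Moreover $(\ZZ[I/I_i])_I\cong\ZZ$ through the augmentation map, so $(Q_*)_I$ is a finitely generated free $\ZZ$-module. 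A direct summand of a finitely generated free $\ZZ$-module is finitely generated and torsion-free, hence free. Note that we only need the decomposition as $I$-modules; the $\Galnr$-structure plays no role here.

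\textbf{Step 2: $T(K_v)^0=T(K_v)^1$.} By \S\ref{paragrapheKottiwtz}, $T(K_v)^0=\ker\kappa_T$, and $T(K_v)^1$ is the preimage under $\kappa_T$ of the torsion subgroup of $((T_*)_I)^{\Galnr}$. Since $((T_*)_I)^{\Galnr}$ is a subgroup of the free group $(T_*)_I$, it has no nonzero torsion. The torsion part is therefore $\{0\}$, and its preimage is exactly $\ker\kappa_T$. This gives $T(K_v)^1=T(K_v)^0$.

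Applying this at each $v\in\Sigma$, and working factor by factor as agreed in the Notations, yields the statement over $K_\Sigma$. The only point needing care is justifying the decomposition of $Q_*$ into permutation modules and the computation of their coinvariants. Both are standard, and the argument contains no real obstacle.
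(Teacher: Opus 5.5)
Your proof is correct and follows the paper's argument: Step 1 is exactly the paper's observation that $\pi_1(-)_I$ is compatible with direct products, so $(T_*)_I$ is a direct summand of the free group $(Q_*)_I$. For Step 2 the paper simply cites \cite[Corollary 11.1.6]{kaletha_prasad_2023}, whereas you derive it from the two facts recalled in \S\ref{paragrapheKottiwtz}: $T(K_v)^0=\ker\kappa_T$, and $T(K_v)^1$ is the preimage of the torsion of $((T_*)_I)^{\Galnr}$, which vanishes once $(T_*)_I$ is free. This is a valid and slightly more self-contained justification of the same step.
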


\begin{proof}
    The construction of the functor $\pi_1(-)_I$ behaves well with respect to direct products, so that $\pi_1(T)_I$ is a direct factor of the ``$\pi_1(-)_I$'' of a quasi-trivial torus, which is free. Consequently, $\pi_1(T)_I$ is free. By \cite[Corollary 11.1.6]{kaletha_prasad_2023}, we then have $T(K_\Sigma)^0=T(K_\Sigma)^1$.
\end{proof}

\begin{lemm}\label{FlasqueTame}
    Let $T$ be a $K_\Sigma$-torus with cyclic wild inertia that is either flasque or coflasque. Then $T$ is a direct factor of a quasi-trivial torus over $K_\Sigma^\mathrm{unr}$, the group $\pi_1(T)_I$ is free, and $T(K_\Sigma)^0=T(K_\Sigma)^1$.
\end{lemm}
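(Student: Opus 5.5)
Work factor by factor, so fix $v\in\Sigma$ and write $\Kh=K_v$. Once we know $T_{\Khnr}$ is a direct factor of a quasi-trivial torus, the last two assertions follow from Lemma~\ref{FactDirectTore}. The whole task is therefore to show that a flasque or coflasque torus with cyclic wild inertia becomes a direct factor of a quasi-trivial torus over $\Khnr$.

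Choose a finite Galois splitting extension $L/\Kh$ whose wild inertia subgroup is cyclic. Then $L\Khnr/\Khnr$ is a finite Galois extension splitting $T_{\Khnr}$. Its Galois group is the inertia group $I_L$ of $L/\Kh$, which is an extension of a cyclic (tame) group by the cyclic wild inertia. Base change to $\Khnr$ keeps $T$ flasque (resp.\ coflasque), since restricting a flasque (resp.\ coflasque) lattice to a subgroup gives a flasque (resp.\ coflasque) lattice. We thus reduce to a purely algebraic statement: a flasque or coflasque $I_L$-lattice is invertible, i.e.\ a direct summand of a permutation lattice. This gives the claim, because lattices correspond contravariantly to $\Khnr$-tori and permutation lattices correspond to quasi-trivial tori.

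For the algebraic statement I would use the Endo--Miyata / Colliot-Thélène--Sansuc theorem. For a finite group $G$ all of whose Sylow subgroups are cyclic (a metacyclic group in the sense used in the paragraph on cyclic wild inertia), every flasque $G$-lattice is invertible. It then suffices to check that all Sylow subgroups of $I_L$ are cyclic:
\begin{itemize}
\item for $p$ the residue characteristic, the $p$-Sylow subgroup is the wild inertia, which is cyclic by hypothesis;
\item for $\ell\neq p$, an $\ell$-Sylow subgroup injects into the tame quotient, which is cyclic.
\end{itemize}
For the coflasque case, a lattice $M$ is coflasque exactly when its dual $M^\circ$ is flasque. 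By the theorem, $M^\circ$ is invertible, and duals of permutation lattices are permutation, so $M$ is invertible too.

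The main obstacle is the input that flasque lattices over groups with cyclic Sylow subgroups are invertible. I would cite it (Colliot-Thélène--Sansuc, \emph{La R-équivalence sur les tores}, or Endo--Miyata) rather than reprove it. I would also double-check the equivalence, asserted in the paragraph above, between cyclic wild inertia and all Sylow subgroups of $I_L$ being cyclic.
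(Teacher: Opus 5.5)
Your proposal is correct and follows the same route as the paper: apply the Endo--Miyata theorem (cited in the paper as \cite[0.5]{CTS}) over $K_\Sigma^\mathrm{unr}$, where the splitting group is the metacyclic inertia group, and then invoke Lemma~\ref{FactDirectTore}. You spell out what the paper leaves implicit, namely that flasqueness passes to the inertia subgroup, that all Sylow subgroups of $I_L$ are cyclic, and the duality argument for the coflasque case.
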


\begin{proof}
    The theorem of Endo and Miyata (see~\cite[0.5]{CTS}) shows that a flasque or coflasque torus is a direct factor of a quasi-trivial torus when this torus is split by a metacyclic extension. Therefore, $T$ is a direct factor of a quasi-trivial torus over $K_\Sigma^\mathrm{unr}$. The rest then follows from Lemma~\ref{FactDirectTore}.
\end{proof}

\pagebreak

\begin{lemm}\label{SuiteKottwitz}
    Let $1\rightarrow A \rightarrow B \rightarrow C \rightarrow 1$ be an exact sequence of reductive $K_\Sigma$-groups. Assume that $A$ is a torus and that the group $\pi_1(A)_I$ is free. Then we have the following exact sequences:
    \[\begin{tikzcd}
        1 & {\pi_1(A)_I} & {\pi_1(B)_I} & {\pi_1(C)_I} & 1
        \arrow[from=1-1, to=1-2]
        \arrow[from=1-4, to=1-5]
        \arrow[from=1-2, to=1-3]
        \arrow[from=1-3, to=1-4]
    \end{tikzcd}\]
    and
    \[\begin{tikzcd}
        1 & {A(K_\Sigma)^0} & {B(K_\Sigma)^0} & {C(K_\Sigma)^0} & 1.
        \arrow[from=1-1, to=1-2]
        \arrow[from=1-4, to=1-5]
        \arrow[from=1-2, to=1-3]
        \arrow[from=1-3, to=1-4]
    \end{tikzcd}\]
\end{lemm}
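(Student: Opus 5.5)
We argue factor by factor, so we may assume $\Sigma=\{v\}$ and write $\Kh=K_v$, $L=\Khnr$.

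\textbf{The sequence of fundamental groups.} Since $A$ is a torus, it is central in $B$ (a normal torus in a connected reductive group is central). The sequence $1\to\pi_1(A)\to\pi_1(B)\to\pi_1(C)\to 1$ is exact as a sequence of Galois modules; this is the standard exactness of $\pi_1$ for a central extension with toric kernel, seen via compatible maximal tori $T_A=A\subset T_B\to T_C$ and the short exact sequence of cocharacter lattices modulo coroots. Taking $I$-coinvariants is right exact, so we get exactness everywhere except possibly on the left. The obstruction to left exactness is the connecting map $H_1(I,\pi_1(C))\to\pi_1(A)_I$. Here $I$ acts through a finite quotient, so $H_1(I,\pi_1(C))$ is torsion. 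Since $\pi_1(A)_I$ is free by hypothesis, the connecting map vanishes, and $\pi_1(A)_I\to\pi_1(B)_I$ is injective.

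\textbf{The sequence of parahoric subgroups.} Over $L$ we have $\kappa_{G_L}\colon G(L)\to\pi_1(G)_I$, and $G(\Kh)^0=\ker\kappa_G$. Note that $\kappa_G$ is the restriction of $\kappa_{G_L}$ to $\Galnr$-invariants, so $G(\Kh)^0=G(L)^0\cap G(\Kh)$ with $G(L)^0=\ker\kappa_{G_L}$. Functoriality of $\kappa$ gives the maps $A(\Kh)^0\to B(\Kh)^0\to C(\Kh)^0$. Injectivity on the left is clear, and exactness in the middle follows because $A(\Kh)\cap B(\Kh)^0$ lies in $\ker\kappa_A$. For the latter, use the commutative square together with the injectivity of $\pi_1(A)_I\to\pi_1(B)_I$ proved above.

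\textbf{Surjectivity onto $C(\Kh)^0$ (main obstacle).} Let $c\in C(\Kh)^0$. First work over $L$. Since $H^1(L,A)=0$ (Steinberg: $L$ has cohomological dimension $\le1$), $B(L)\to C(L)$ is surjective, so we can lift $c$ to some $b\in B(L)$. Then $\kappa_B(b)$ maps to $0$ in $\pi_1(C)_I$, so by the first exact sequence $\kappa_B(b)$ comes from $\pi_1(A)_I$. Using surjectivity of $\kappa_{A_L}$, which holds for tori over $L$, we modify $b$ by an element of $A(L)$ to get $b\in B(L)^0$.

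It remains to descend to $\Kh$. The fibre of $B(L)^0\to C(L)^0$ over $c$ is a torsor under $A(L)^0$, and it is $\Galnr$-stable. Its class lies in $H^1(\Galnr,A(L)^0)$, and we need this group to vanish. Here $A(L)^0$ is the $L$-points of the connected Néron model $\mathcal A^0$, a smooth connected group over the ring of integers of $L$. Lang's theorem over the perfect residue field, combined with Hensel/completeness (successive approximation on the filtration of $\mathcal A^0$), gives $H^1(\Galnr,\mathcal A^0(\Oc_L))=0$. This is the analogue of \cite[Lemma 11.5.x / Bruhat--Tits]{kaletha_prasad_2023} for parahorics. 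It requires $\Kh$ complete with perfect residue field; this is the step I expect to need the most care, especially the use of Lang's theorem for the possibly non-affine-simple special fibre of the connected Néron model. Granting it, a $\Galnr$-fixed point of the fibre yields $b\in B(\Kh)\cap B(L)^0=B(\Kh)^0$ lifting $c$.
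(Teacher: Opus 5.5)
Your proof follows the paper's route. The $\pi_1$ sequence comes from exactness of $\pi_1$, with the torsion group $H_1(I,\pi_1(C))$ mapping trivially into the free group $\pi_1(A)_I$. Over $\Khnr$ you use Steinberg together with surjectivity of $\kappa_{A_{\Khnr}}$; your explicit modification of the lift $b$ is exactly the paper's snake-lemma step. Descent to $\Kh$ then goes through $H^1(\Galnr,A(\Khnr)^0)=1$, which the paper simply cites from \cite[Lemma 11.7.1]{kaletha_prasad_2023}.

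One correction to your sketch of that vanishing: a perfect residue field is not enough. Lang's theorem is a statement about finite fields. It applies here only because $K$ is global in Sections 1--2, so the residue fields of the $K_v$ are finite. For the tori over $\RR(\!(t)\!)$ in \S\ref{SectionCEXSurjectivite}, the Remark there identifies the nonzero group $\coker(\kappa_T)$ with a subgroup of $H^1(\Galnr,T(\Khnr)^0)$, so that group is nontrivial.

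With a finite residue field, the cleanest argument also avoids successive approximation over the non-complete field $\Khnr$. The class defines an $\mathcal{A}^0$-torsor over $\Oc_{\Kh}$. Lang's theorem gives a rational point on its special fibre; it holds for every connected algebraic group over a finite field, so the shape of the special fibre is irrelevant. Smoothness and Hensel's lemma then lift that point.
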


\begin{proof}
    Recall that the functor $\pi_1(-)$ is exact (see~\cite[Theorem 3.8]{borovoi2013algebraicfundamentalgroupreductive}). Taking coinvariants under $I$, we obtain:
    \[\begin{tikzcd}
        H_1(I,\pi_1(A)) & {\pi_1(A)_I} & {\pi_1(B)_I} & {\pi_1(C)_I} & 1.
        \arrow[from=1-1, to=1-2]
        \arrow[from=1-4, to=1-5]
        \arrow[from=1-2, to=1-3]
        \arrow[from=1-3, to=1-4]
    \end{tikzcd}\]
    Since $\pi_1(A)_I$ is free by Lemma~\ref{FactDirectTore}, the image of $H_1(I,\pi_1(A))$ (which is a torsion group) in $\pi_1(A)_I$ is trivial. Hence the first exact sequence.
    
    As for the second exact sequence, since the residue field is perfect, Steinberg's theorem (see~\cite[Theorem 2.3.3.(1)]{kaletha_prasad_2023}) gives $H^1(I,A(K_\Sigma^\mathrm{sep}))=1$. We then have:
    \[\begin{tikzcd}
        1 & {A(K_\Sigma^\mathrm{unr})} & {B(K_\Sigma^\mathrm{unr})} & {C(K_\Sigma^\mathrm{unr})} & H^1(I,A(K_\Sigma^\mathrm{sep})) = 1.
        \arrow[from=1-1, to=1-2]
        \arrow[from=1-4, to=1-5]
        \arrow[from=1-2, to=1-3]
        \arrow[from=1-3, to=1-4]
    \end{tikzcd}\]
    Combining the two previous exact sequences via the Kottwitz morphism and using the Snake Lemma, we obtain:
    \[\begin{tikzcd}
        1 & {A(K_\Sigma^\mathrm{unr})^0} & {B(K_\Sigma^\mathrm{unr})^0} & {C(K_\Sigma^\mathrm{unr})^0} &  1.
        \arrow[from=1-1, to=1-2]
        \arrow[from=1-4, to=1-5]
        \arrow[from=1-2, to=1-3]
        \arrow[from=1-3, to=1-4]
    \end{tikzcd}\]
    Taking invariants under $\Galnr$ and using the fact that $H^1(\Galnr,A(K_\Sigma^\mathrm{unr})^0)=1$ by \cite[Lemma 11.7.1]{kaletha_prasad_2023}, we then obtain:
    \[\begin{tikzcd}
        1 & {A(K_\Sigma)^0} & {B(K_\Sigma)^0} & {C(K_\Sigma)^0} &  H^1(\Galnr,A(K_\Sigma^\mathrm{unr})^0)=1.
        \arrow[from=1-1, to=1-2]
        \arrow[from=1-4, to=1-5]
        \arrow[from=1-2, to=1-3]
        \arrow[from=1-3, to=1-4]
    \end{tikzcd}\]
    Hence the desired exact sequence.
\end{proof}

\begin{prop}\label{ThmCorpsNbr}
    Let $G$ be a reductive group over $K$. If the group $G_{K_\Sigma}$ has cyclic wild inertia, then the following natural map is an isomorphism:
    $$G(K_\Sigma)/\overline{G(K)}\overset{\sim}{\rightarrow}G(K_\Sigma)/G(K)\,G(K_\Sigma)^0.$$
\end{prop}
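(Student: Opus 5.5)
The map is surjective, since $G(K)\,G(K_\Sigma)^0$ contains $G(K)$ and hence it suffices to see it is well defined. So the content of the statement is the equality $\overline{G(K)} = G(K)\,G(K_\Sigma)^0$. I will prove the two inclusions separately.

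\textbf{Inclusion $G(K)\,G(K_\Sigma)^0 \subseteq \overline{G(K)}$.} The subgroup $G(K_\Sigma)^0$ is open, as the kernel of $\kappa_G$, which is locally constant. Hence $G(K)\,G(K_\Sigma)^0$ is an open subgroup, and therefore also closed. It contains $G(K)$, so it contains $\overline{G(K)}$. Read in the other direction, this gives $\overline{G(K)}\subseteq G(K)\,G(K_\Sigma)^0$.

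\textbf{Inclusion $G(K_\Sigma)^0\subseteq\overline{G(K)}$.} This is the hard direction, and the hypothesis on wild inertia is needed for it. I will use a flasque resolution of $G$ over $K$: an exact sequence
$$1\to S\to H\to G\to 1$$
with $S$ a flasque $K$-torus and $H$ a reductive group whose derived group $H^{\mathrm{der}}$ is simply connected and whose quotient torus $H/H^{\mathrm{der}}=P$ is quasi-trivial (Colliot-Thélène). The torus $S$ can be chosen split by the same extension that splits $G$. Hence $S_{K_\Sigma}$ has cyclic wild inertia, and Lemma~\ref{FlasqueTame} gives that $\pi_1(S)_I$ is free. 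Lemma~\ref{SuiteKottwitz} then gives the surjection
$$H(K_\Sigma)^0\to G(K_\Sigma)^0.$$
It therefore suffices to show $H(K_\Sigma)^0\subseteq\overline{H(K)}$, since images of elements of $\overline{H(K)}$ lie in $\overline{G(K)}$.

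\textbf{Weak approximation for $H$.} $H$ satisfies weak approximation. It is an extension of the quasi-trivial torus $P$ by the simply connected group $H^{\mathrm{der}}$. $P$ is rational, hence satisfies weak approximation, and $H^{\mathrm{der}}$ satisfies weak approximation at the finite set $\Sigma$ (Kneser, Platonov; in the function field case, Prasad's strong approximation argument). Moreover $H^1(K,H^{\mathrm{der}})\to\prod_{v\in\Sigma}H^1(K_v,H^{\mathrm{der}})$ is surjective, or can be bypassed as follows. $H$ is a $K$-rational variety: it is an $H^{\mathrm{der}}$-torsor over $P$, which is trivial over the generic point because $H^1$ of a simply connected group over the function field of $P$ is handled by Sansuc's argument. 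More safely, I would invoke directly Sansuc's theorem: for such $H$, the defect of weak approximation is controlled by the flasque torus of $H$, which is trivial here. This gives $\overline{H(K)}=H(K_\Sigma)$.

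In the function field case I would cite Borovoi's version of this result. This step, namely weak approximation for $H$ in both number and function fields, is the main obstacle. The rest of the argument is formal given Lemmas~\ref{FlasqueTame} and \ref{SuiteKottwitz}.

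\textbf{Conclusion.} Combining the above, $G(K_\Sigma)^0$ is the image of $H(K_\Sigma)^0$, which lies in the image of $\overline{H(K)}$, which in turn lies in $\overline{G(K)}$. Hence $G(K)\,G(K_\Sigma)^0\subseteq\overline{G(K)}$, and together with the first inclusion, $\overline{G(K)} = G(K)\,G(K_\Sigma)^0$. This is exactly the assertion that the natural map is an isomorphism.
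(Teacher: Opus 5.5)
Your outline is the paper's own: openness of $G(K_\Sigma)^0$ gives $\overline{G(K)}\subseteq G(K)\,G(K_\Sigma)^0$, and the other inclusion uses a flasque resolution $1\to S\to H\to G\to 1$, weak approximation for $H$, and surjectivity of $H(K_\Sigma)^0\to G(K_\Sigma)^0$ via Lemma~\ref{SuiteKottwitz}. However, the one step where the hypothesis is used does not follow as written.

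You assert that $S$ ``can be chosen split by the same extension that splits $G$'' and deduce that $S_{K_\Sigma}$ has cyclic wild inertia. The hypothesis is local and existential: for each $v\in\Sigma$, \emph{some} Galois extension $E_v/K_v$ splitting $G_{K_v}$ has cyclic wild inertia. Your $S$ is a global torus. The standard fact is that it can be chosen split by a \emph{global} Galois splitting field $L$ of $G$, but this only tells you that $S_{K_v}$ splits over $L_w$. Nothing forces $L_w$ to have cyclic wild inertia. A reductive group that is not a torus has in general no smallest splitting field (think of $\mathrm{SL}_1(D)$), so $L_w$ need not lie in $E_v$. Moreover, cyclic wild inertia is not even an invariant of the resolution: $1\to S\times R_{L/K}\GG_m\to H\times R_{L/K}\GG_m\to G\to 1$ is again a flasque resolution. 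As written, Lemma~\ref{FlasqueTame} cannot be invoked, so the freeness of $\pi_1(S)_I$, on which your use of Lemma~\ref{SuiteKottwitz} rests, is not established.

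The missing idea is to use the hypothesis locally and transport it via uniqueness of the flasque torus up to quasi-trivial factors, which is what the paper does.
\begin{itemize}
\item Over $K_v^{\mathrm{unr}}$, take a flasque resolution of $G$ whose flasque torus $F'$ is split by $E_vK_v^{\mathrm{unr}}$.
\item Its Galois group is the inertia group of $E_v/K_v$, all of whose Sylow subgroups are cyclic. By Endo--Miyata, $F'$ is a direct factor of a quasi-trivial torus.
\item Since $S_{K_v^{\mathrm{unr}}}\times P_1\cong F'\times P_2$ with $P_1,P_2$ quasi-trivial, $S_{K_v^{\mathrm{unr}}}$ is also such a direct factor. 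Lemma~\ref{FactDirectTore} then gives that $\pi_1(S)_I$ is free.
\end{itemize}
The robust property is being a direct factor of a quasi-trivial torus over $K_v^{\mathrm{unr}}$, which is insensitive to quasi-trivial factors. Alternatively, you could salvage your sentence by building $S$ from a coflasque resolution $0\to S_*\to P_*\to\pi_1(G)\to 0$ over the field cut out by the Galois action on $\pi_1(G)$. That field lies in every splitting field of $G$, so its completions lie in each $E_v$; but this construction has to be carried out, not asserted.

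Two smaller points. First, drop the aside that $H$ is $K$-rational. It is unjustified: it would need the generic $H^{\mathrm{der}}$-torsor to be trivial and $H^{\mathrm{der}}$ itself to be rational. It is also unnecessary, since weak approximation for $H$ is exactly \cite[Proposition 2.6]{borovoi2025defectweakapproximationreductive}, valid for number and function fields. Second, your first inclusion is labelled backwards, although the argument beneath it proves the correct one.
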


\begin{proof}
    Since $G(K_\Sigma)^0$ is an open subgroup of $G(K_\Sigma)$, we have the inclusion ${\overline{G(K)}\subset G(K)G(K_\Sigma)^0}$. It is therefore sufficient to prove that $G(K_\Sigma)^0 \subset \overline{G(K)}$. Note that since the group $G_{K_\Sigma}$ has cyclic wild inertia, we must have a flasque resolution (see~\cite[Proposition-Définition 3.1]{FlasqueGrpLin}) over $K_\Sigma^\mathrm{unr}$ given by ${1 \rightarrow F' \rightarrow H' \rightarrow G_{K_\Sigma^\mathrm{unr}}\rightarrow 1}$ where $F'$ also has cyclic wild inertia. Consequently, it is a direct factor of a quasi-trivial torus by Lemma~\ref{FlasqueTame}.
    
    Now consider a flasque resolution of $G$ over $K$ given by $1\rightarrow F \rightarrow H \rightarrow G \rightarrow 1$. Since $F$ is unique up to a quasi-trivial torus (see~\cite[Proposition 3.2.(iii)]{FlasqueGrpLin}), the previous discussion shows that $F$ is in fact a direct factor of a quasi-trivial torus over $K_\Sigma^\mathrm{unr}$. Furthermore, \cite[Proposition 2.6]{borovoi2025defectweakapproximationreductive} indicates that $H$ satisfies weak approximation, in other words: $\overline{H(K)}=H(K_\Sigma)$. This implies that the subgroup $H(K_\Sigma)^0\subset H(K_\Sigma)$ maps into $\overline{G(K)}$. But Lemma~\ref{SuiteKottwitz} tells us that $H(K_\Sigma)^0\rightarrow G(K_\Sigma)^0$ is surjective, so that $G(K_\Sigma)^0\subset \overline{G(K)}$ as desired.
\end{proof}

\begin{rema}
    The result no longer holds if the group does not have cyclic wild inertia. Consider the extension $L/K\colonequals \QQ(\zeta_8)/\QQ=\QQ(\sqrt{-1},\sqrt{2})/\QQ$ (where $\zeta_8=e^{\frac{2i\pi}{8}}$) and consider the norm torus $T\colonequals R^{(1)}_{L/K}(\GG_m)$. According to \cite[11.6.~Example 3.2)]{VoskresenskiiBook}, $\QQ_2(\zeta_8)/\QQ_2$ is a totally ramified biquadratic extension such that $T(\QQ_2)/\overline{T(\QQ)} = \ZZ/2\ZZ \not = 0$. However, $\coker(T(\QQ)\rightarrow \Ima(\kappa_{T_{\QQ_2}}))=0$ \linebreak (i.e., $T(\QQ_2)/T(\QQ)\, T(\QQ_2)^0 = 0$). Indeed, by adapting Section~\ref{Section1erCEX} to our context and using its notation, we observe that $N(a(1-\zeta_8)) \in T(\QQ)$ and $N(b(1-\zeta_8)) \in T(\QQ)$ map respectively to $\overline{a}$ and $\overline{b}$ under $\kappa_{T_{\QQ_2}}$, and thus generate $\Ima(\kappa_{T_{\QQ_2}})$. 
\end{rema}

\section{Weak approximation and construction of examples}\label{SectionCEX}

In this section, we compute defects of weak approximation. Combined with Proposition~\ref{ThmCorpsNbr}, we provide a negative answer to Question~\ref{QuestionCT} (and to its semisimple variant), when $K$ is a number field (see Proposition~\ref{ContreExempleCorpsDeNombre}) and when $K$ is a function field (see Proposition~\ref{ContreExempleCorpsDeFonction}).

\begin{lemm}\label{CalculCokerMu}
    Let $\mu\colonequals R_{K'/K}(\mu_n)/\mu_n$ with $n\in \NN^\times$ such that $\mathrm{char}(K) \nmid n$. We have:
    $$\coker \left(H^1(K,\mu)\rightarrow H^1(K_\Sigma,\mu)\right)\overset{\sim}{\rightarrow} \coker\left(\Br(K'/K)[n]\rightarrow \Br(K'_{\Sigma'}/K_\Sigma)[n]\right) \overset{\sim}{\rightarrow} \ZZ/\frac{\gcd(d_\Sigma,n)}{\gcd(d_\Sigma,d_{\overline{\Sigma}},n)} \ZZ,$$
    where for a set of places $S$ of $K$, we denote $d_S\colonequals \mathrm{lcm}_{v\in S}[K'_{v'}:K_v]$, and where for every $v\in \Omega_K$, \linebreak $v'$ denotes a place of $K'$ lying over $v$. 
\end{lemm}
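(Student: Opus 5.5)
The plan is to first rewrite $H^1(-,\mu)$ in terms of Brauer groups using the defining exact sequence of $\mu$, then to show that weak approximation for $K'$ makes the extra terms harmless, and finally to compute the Brauer cokernel with class field theory. As in the statement, $K'_{\Sigma'}$ denotes $\prod_{v\in\Sigma}K'\otimes_K K_v$.

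\emph{Step 1: the long exact sequence.} I start from the short exact sequence of finite $K$-group schemes
$$1\rightarrow \mu_n\rightarrow R_{K'/K}(\mu_n)\rightarrow \mu\rightarrow 1.$$
Since $\mathrm{char}(K)\nmid n$, Shapiro's lemma and Kummer theory give $H^1(K,R_{K'/K}\mu_n)=K'^\times/K'^{\times n}$ and $H^2(K,R_{K'/K}\mu_n)=\Br(K')[n]$. The map $H^2(K,\mu_n)=\Br(K)[n]\rightarrow \Br(K')[n]$ is restriction, so its kernel is $\Br(K'/K)[n]$. This gives an exact row
$$K'^\times/K'^{\times n}\rightarrow H^1(K,\mu)\rightarrow \Br(K'/K)[n]\rightarrow 0,$$
and the same argument, place by place, gives the analogous row over $K_\Sigma$:
$$K'^\times_{\Sigma'}/K'^{\times n}_{\Sigma'}\rightarrow H^1(K_\Sigma,\mu)\rightarrow \Br(K'_{\Sigma'}/K_\Sigma)[n]\rightarrow 0.$$
The localization maps make these two rows a commutative diagram.

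\emph{Step 2: the first isomorphism.} Taking cokernels of the vertical maps is right exact, so
$$\coker(K'^\times/n)\rightarrow \coker(H^1)\rightarrow \coker(\Br)\rightarrow 0$$
is exact. It therefore suffices to show that $K'^\times\rightarrow K'^\times_{\Sigma'}/K'^{\times n}_{\Sigma'}$ is surjective. This holds because $K'$ is dense in $K'_{\Sigma'}$ (weak approximation for $K'$) and $K'^{\times n}_{\Sigma'}$ is open in $K'^\times_{\Sigma'}$; the openness uses $\mathrm{char}\nmid n$ and Hensel's lemma, and is trivial at archimedean places. So $\coker(K'^\times/n)=0$ and the first map of the statement is an isomorphism.

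\emph{Step 3: computing the Brauer cokernel.} Put $d_v=[K'_{v'}:K_v]$. By local class field theory, restriction to $K'_{v'}$ multiplies the local invariant by $d_v$, so $\Br(K'_{v'}/K_v)=\tfrac{1}{d_v}\ZZ/\ZZ$. Hence $\Br(K'_{v'}/K_v)[n]=A_v\colonequals\tfrac{1}{\gcd(d_v,n)}\ZZ/\ZZ$. By Albert--Brauer--Hasse--Noether, and the Hasse principle for $\Br(K')$,
$$\Br(K'/K)[n]=\Bigl\{(x_v)\in\bigoplus_v A_v \;:\; \textstyle\sum_v x_v=0\Bigr\}.$$
Its image in $\bigoplus_{v\in\Sigma}A_v$ consists of the tuples whose sum lies in $\sum_{v\in\overline\Sigma}A_v$. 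Since only finitely many local degrees occur, $d_{\overline\Sigma}$ is a finite divisor of $[K':K]$, and this sum is the subgroup $B=\tfrac{1}{\gcd(d_{\overline\Sigma},n)}\ZZ/\ZZ$.

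Consequently the sum map identifies the cokernel with $\bigl(\sum_{v\in\Sigma}A_v\bigr)/\bigl(B\cap\sum_{v\in\Sigma}A_v\bigr)$. Here $\sum_{v\in\Sigma}A_v=\tfrac{1}{\gcd(d_\Sigma,n)}\ZZ/\ZZ$. Inside $\QQ/\ZZ$, the intersection of two cyclic subgroups corresponds to the gcd of their orders. This gives $\ZZ/\tfrac{\gcd(d_\Sigma,n)}{\gcd(d_\Sigma,d_{\overline\Sigma},n)}\ZZ$, as claimed.

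\emph{Expected difficulty.} The main delicate point is Step 3: the description of $\Br(K'/K)$ through local invariants requires care with the archimedean places and with $K'/K$ not necessarily Galois, where the local degree may depend on the choice of $v'$. I would handle the latter by working with $K'\otimes_K K_v$ and all places above $v$ (which reduces to the stated $d_v$ if that degree is independent of $v'$, e.g.\ for Galois $K'/K$), checking that the relative Brauer group is still $\tfrac{1}{d_v}\ZZ/\ZZ$ with $d_v$ the gcd over $v'$.
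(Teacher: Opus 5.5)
Your proof is correct. Its skeleton is the paper's: two exact rows ending in $\Br(K'/K)[n]$ and $\Br(K'_{\Sigma'}/K_\Sigma)[n]$, a density argument showing the left vertical map has trivial cokernel, and then the Albert--Brauer--Hasse--Noether computation. Your Step~3 is essentially the paper's argument with $\phi_\Sigma$ and $\phi_{\overline{\Sigma}}$.

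The genuine difference is the exact sequence in Step~1.

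\emph{The paper's route.} It introduces the torus $T=R_{K'/K}(\GG_m)/\GG_m$ and writes $\mu=\ker(T\xrightarrow{\times n}T)$. Using $H^1(K,T)=\Br(K'/K)$, this gives rows $1\to T(K)/nT(K)\to H^1(K,\mu)\to\Br(K'/K)[n]\to 1$. The needed surjectivity of $T(K)/n\to T(K_\Sigma)/n$ then comes from weak approximation for $T$. The paper deduces this from retract rationality via Colliot-Th\'el\`ene--Sansuc.

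\emph{Your route.} You use $1\to\mu_n\to R_{K'/K}(\mu_n)\to\mu\to 1$ together with Shapiro and Kummer theory. The left term becomes $K'^\times/K'^{\times n}$. It no longer injects, but right exactness of cokernels makes that irrelevant. You then only need weak approximation for the field $K'$ itself, plus openness of $n$-th powers.

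\emph{Comparison.} Since $K'^\times\to T(K)$ is onto by Hilbert~90, your density input is exactly what underlies the paper's. So your route is the more elementary one and avoids the flasque-torus machinery. The paper's choice mainly keeps the argument inside the torus framework it uses throughout.

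Your caveat about non-Galois $K'/K$ is also well taken. The paper's identification $\Br(K'_{v'}/K_v)=\Br(K'\otimes_K K_v/K_v)$ tacitly assumes all local degrees above $v$ coincide. This is automatic for Galois $K'/K$, the only case used later. In general $d_v$ must be the gcd of the $[K'_w:K_v]$ over $w\mid v$, as you say.
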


\begin{proof}
    Let $T\colonequals  R_{K'/K}(\GG_m)/\GG_m$. Observe that $\mu=\ker(T\overset{\times n}{\rightarrow} T)$. Using the long exact sequence in cohomology over $K$ and $K_\Sigma$, we obtain the following diagram with exact rows:
    \[\begin{tikzcd}
        1 & {T(K)/nT(K)} & {H^1(K,\mu)} & {\Br(K'/K)[n]} & 1 \\
        1 & {T(K_\Sigma)/nT(K_\Sigma)} & {H^1(K_\Sigma,\mu)} & 
    {\Br(K'_{\Sigma'}/K_\Sigma)[n]} & 1.
        \arrow[from=1-1, to=1-2]
        \arrow[from=1-2, to=1-3]
        \arrow[from=1-2, to=2-2]
        \arrow[from=1-3, to=1-4]
        \arrow[from=1-3, to=2-3]
        \arrow[from=1-4, to=1-5]
        \arrow[from=1-4, to=2-4]
        \arrow[from=2-1, to=2-2]
        \arrow[from=2-2, to=2-3]
        \arrow[from=2-3, to=2-4]
        \arrow[from=2-4, to=2-5]
    \end{tikzcd}\]

    Since $T$ admits by definition a flasque resolution ${1\rightarrow \GG_m \rightarrow R_{K'/K}(\GG_m)\rightarrow T \rightarrow 1}$, it is retract \mbox{$K$-rational} (see~\cite[Proposition 7.4]{CTS}). Consequently, $T(K)$ is dense in $T(K_\Sigma)$ (see~\cite[Proposition 8.1.(iii)]{CTS}). Furthermore, since $\mathrm{char}(K)\nmid n$, multiplication by $n$ in $T$ is smooth, so that $nT(K_\Sigma)$ is an open subset of $T(K_\Sigma)$ (see~\cite[Lemma 3.1.2]{GGMB}). We therefore have ${T(K) \cdot nT(K_\Sigma) = T(K_\Sigma)}$. This implies that $T(K)/nT(K)\rightarrow T(K_\Sigma)/nT(K_\Sigma) $ is surjective.
    \medskip

The Snake Lemma applied to the previous diagram then yields the isomorphism:
    $$\coker \left(H^1(K,\mu)\rightarrow H^1(K_\Sigma,\mu)\right)\overset{\sim}{\rightarrow}\coker\left(\Br(K'/K)[n]\rightarrow \Br(K'_{\Sigma'}/K_\Sigma)[n]\right).$$
    Let us now compute the right-hand side. The Brauer-Hasse-Noether theorem (see~\cite[Theorem 14.11]{HarariCorpsDeClasses}) restricted to $\Br(K'/K)[n]$ yields the following exact sequence:
    \[\begin{tikzcd}
        1 & {\Br(K'/K)[n]} & {\bigoplus_{v\in \Omega_K} \Br(K'\otimes_K K_v /K_v)[n]} 
    & \frac{1}{n}\ZZ/\ZZ. 
        \arrow[from=1-1, to=1-2]
        \arrow[from=1-2, to=1-3]
        \arrow["{\sum \mathrm{inv}_v}", from=1-3, to=1-4]
    \end{tikzcd}\]
    Consider the partial maps $\phi_\Sigma\colonequals \sum_{v\in \Sigma} \mathrm{inv}_v$ and $\phi_{\overline{\Sigma}}\colonequals \sum_{v\in \overline{\Sigma}} \mathrm{inv}_v$. We thus have ${\phi_\Sigma(x)=-\phi_{\overline{\Sigma}}(x)=\phi_{\overline{\Sigma}}(-x)}$ for $x\in \Br(K'/K)[n]$. In fact, $\Ima((\phi_\Sigma)_{\mid\Br(K'/K)[n]})=\Ima(\phi_\Sigma)\cap \Ima(\phi_{\overline{\Sigma}})$. This finally allows us to deduce the following exact sequence:
    \[\begin{tikzcd}
        1 & {\Br(K'/K)[n]} & {\Br(K'_{\Sigma'}/K_\Sigma)[n]} & 
    {\Ima(\phi_\Sigma)/(\Ima(\phi_\Sigma)\cap \Ima(\phi_{\overline{\Sigma}}))}
     & 1.
        \arrow[from=1-1, to=1-2]
        \arrow[from=1-2, to=1-3]
        \arrow["{\phi_\Sigma}", from=1-3, to=1-4]
        \arrow[from=1-4, to=1-5]
    \end{tikzcd}\]
    To conclude, we compute the order of the quotient (which is cyclic). Let $v$ be a place of $K$. Observe that $\Br(K'_{v'}/K_v)=\Br(K'\otimes_K K_v /K_v)$. Also, $d_v\colonequals [K'_{v'}:K_v]=| \Br(K'_{v'}/K_v) |$ and the image of $\Br(K'_{v'} /K_v)$ in $\QQ/\ZZ$ is $(\frac{1}{d_v}\ZZ)/\ZZ$ (see~\cite[Corollary 8.10]{HarariCorpsDeClasses}). Thus, the image of $\Br(K'\otimes_K K_v /K_v)[n]$ is $(\frac{1}{\gcd(d_v,n)}\ZZ)/\ZZ$. Consequently, $\Ima(\phi_\Sigma)=\frac{1}{\gcd(d_\Sigma,n)}\ZZ/\ZZ$ and $\Ima(\phi_{\overline{\Sigma}})=\frac{1}{\gcd(d_{\overline{\Sigma}},n)}\ZZ/\ZZ$. We then observe that $\Ima(\phi_{\overline{\Sigma}})\cap \Ima(\phi_\Sigma) = \frac{1}{\gcd(|\Ima(\phi_\Sigma)|, |\Ima(\phi_{\overline{\Sigma}})|)}\ZZ / \ZZ=\frac{1}{\gcd(d_\Sigma,d_{\overline{\Sigma}},n)}\ZZ/\ZZ$.
    We finally conclude that the cokernel has order $\frac{\gcd(d_\Sigma,n)}{\gcd(d_\Sigma,d_{\overline{\Sigma}},n)}$ as desired.
\end{proof}

\begin{prop}\label{stratégie}
    Let $K'/K$ be a finite Galois extension, and let $\mu$ be a finite étale $K$-group of multiplicative type split over $K'$. Set $\Sigma'\colonequals \left\{w\in \Omega_{K'} \mid \exists v\in \Sigma, w\mid v\right\}$. We then have $K'_{\Sigma'}=K'\otimes_K K_\Sigma$. Assume that the wild inertia subgroup of $K'_{\Sigma'}/K_\Sigma$ is cyclic.
    Then, there exist a semisimple $K$-group $G$ and a $K$-torus $T$ with $T(K_\Sigma)^0=T(K_\Sigma)^1$, both split over $K'$, such that:
    $$G(K_\Sigma)/G(K)\,G(K_\Sigma)^0 \overset{\sim}{\leftarrow} G(K_\Sigma)/\overline{G(K)}\overset{\sim}{\rightarrow}\coker \left(H^1(K,\mu)\rightarrow H^1(K_\Sigma,\mu)\right) \text{\hspace{2pt} and }$$
    $$T(K_\Sigma)/T(K)\,T(K_\Sigma)^0 \overset{\sim}{\leftarrow}T(K_\Sigma)/\overline{T(K)}\overset{\sim}{\rightarrow}\coker \left(H^1(K,\mu)\rightarrow H^1(K_\Sigma,\mu)\right).$$
\end{prop}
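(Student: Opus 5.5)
Construct $G$ and $T$ as quotients by $\mu$ of groups that satisfy weak approximation, then read off the defect of weak approximation from the cohomology sequence. The left isomorphisms then come from Proposition~\ref{ThmCorpsNbr}, since $G$ and $T$ are split over $K'$ and so have cyclic wild inertia over $K_\Sigma$ by hypothesis.

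\textbf{The torus.} Let $\hat\mu$ be the character module of $\mu$, a finite $\Gal(K'/K)$-module. Choose a surjection $\ZZ[\Gal(K'/K)]^m\twoheadrightarrow\hat\mu$. Dually, this embeds $\mu$ as a closed subgroup of the quasi-trivial torus $P\colonequals R_{K'/K}(\GG_m)^m$, and we set $T\colonequals P/\mu$. The sequence $1\to\mu\to P\to T\to1$ and Hilbert 90 ($H^1(K,P)=H^1(K_\Sigma,P)=0$) give a commutative diagram with exact rows
\[P(K)\to T(K)\to H^1(K,\mu)\to 0\quad\text{and}\quad P(K_\Sigma)\to T(K_\Sigma)\to H^1(K_\Sigma,\mu)\to 0.\]
The image of $P(K_\Sigma)$ in $T(K_\Sigma)$ is open, since $P\to T$ is an isogeny and $\mu$ is étale. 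Because $P$ satisfies weak approximation, this image lies in $\overline{T(K)}$. Hence $\overline{T(K)}=T(K)\cdot\Ima(P(K_\Sigma))$, and so $T(K_\Sigma)/\overline{T(K)}\cong\coker(H^1(K,\mu)\to H^1(K_\Sigma,\mu))$.

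It remains to check $T(K_\Sigma)^0=T(K_\Sigma)^1$. By Lemma~\ref{FactDirectTore} it suffices that $T$ be a direct factor of a quasi-trivial torus over $K_\Sigma^{\mathrm{unr}}$, or more directly that $\pi_1(T)_I$ be torsion-free. Here $T_*$ is an overlattice of $P_*$ with $T_*/P_*\cong\mu(\bar K)$, so this is not automatic. This is the main obstacle. My first attempt: replace $T$ by a flasque or coflasque modification without changing the weak approximation defect, namely take a flasque resolution $1\to F\to H\to T\to 1$ and use that the defect is a stable birational invariant. Then apply Lemma~\ref{FlasqueTame} to the relevant torus, which has cyclic wild inertia. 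Failing that, I would choose the presentation of $\mu$ so that $T_*$ is coflasque as an $I$-module, so Lemma~\ref{FlasqueTame} applies directly. Alternatively I would simply prove torsion-freeness of $(T_*)_I$ through the sequence $H_1(I,T_*/P_*)\to (P_*)_I\to(T_*)_I\to (T_*/P_*)_I\to0$, adjusting $P$ by adding an extra quasi-trivial factor to kill torsion.

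\textbf{The semisimple group.} Embed $\mu$ centrally in a simply connected semisimple group split over $K'$. For example, realize $\mu\subset R_{K'/K}(\mu_N)^m$ by the same duality (with $N$ the exponent of $\mu$), and then $R_{K'/K}(\mu_N)\subset R_{K'/K}(\mathrm{SL}_N)$. Put $\tilde G\colonequals R_{K'/K}(\mathrm{SL}_N)^m$ and $G\colonequals\tilde G/\mu$. Then $\tilde G$ is simply connected and quasi-split, satisfies weak approximation, and has $H^1(K_v,\tilde G)=0$ at finite places by Kneser. At archimedean places it still has trivial $H^1$, because $\mathrm{SL}_N$ is special and Shapiro's lemma applies. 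The same diagram chase, with the open image of $\tilde G(K_\Sigma)$ contained in $\overline{G(K)}$ and the connecting maps surjective onto $H^1(-,\mu)$, gives $G(K_\Sigma)/\overline{G(K)}\cong\coker(H^1(K,\mu)\to H^1(K_\Sigma,\mu))$. Here the quotients are taken in the sense of orbits of a normal subgroup, since $\mu$ is central. Finally, Proposition~\ref{ThmCorpsNbr} applied to $G$ and to $T$ gives the two left-hand isomorphisms.
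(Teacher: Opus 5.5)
There is a genuine gap, at exactly the step you call the main obstacle. The equality $T(K_\Sigma)^0=T(K_\Sigma)^1$ is part of the statement, and it is what makes the torus useful. $T(K_\Sigma)^1$ is the maximal bounded subgroup, so only with this equality does a non-zero $T(K_\Sigma)/T(K)\,T(K_\Sigma)^0$ contradict the decomposition being asked about.

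You list three possible strategies but carry out none of them. Moreover, the torus you actually construct, from an arbitrary surjection $\ZZ[\Gal(K'/K)]^m\twoheadrightarrow\hat\mu$, can violate the equality. Here is an example:
\begin{itemize}
\item Take $K'/K$ quadratic and ramified at some $v\in\Sigma$ (say $\QQ(\sqrt3)/\QQ$ with $\Sigma=\{3\}$), with $\tau$ the non-trivial element of $\Gal(K'/K)$.
\item Take $\mu=\mu_2$, $m=1$, and $e_1,e_\tau\mapsto 1$.
\item Then $\hat T=\ker(\ZZ[\Gal(K'/K)]\to\ZZ/2\ZZ)=\ZZ(e_1+e_\tau)\oplus\ZZ(e_1-e_\tau)$, so $T\cong\GG_m\times R^{(1)}_{K'/K}(\GG_m)$ and $(T_*)_I\cong\ZZ\oplus\ZZ/2\ZZ$.
\item By surjectivity of the Kottwitz morphism, $T(K_v)^1/T(K_v)^0\cong\ZZ/2\ZZ$.
\end{itemize}

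The missing idea is to choose $P$ so that $T$ is coflasque. This is what the paper does, by quoting \cite[Proposition~1.3]{CTS}. Your second fallback points in this direction, but it needs an actual construction. One that works:
\begin{itemize}
\item Let $\hat P=\bigoplus_{H\le\Gal(K'/K)}\bigoplus_{x\in\hat\mu^H}\ZZ[\Gal(K'/K)/H]$, sending $gH$ in the $(H,x)$-summand to $gx$.
\item Then $\hat P^H\to\hat\mu^H$ is onto for every subgroup $H$. Since $H^1(H,\hat P)=0$, this gives $H^1(H,\hat T)=0$. So $T$ is coflasque and is still split by $K'$.
\item Lemma~\ref{FlasqueTame} then gives $(T_*)_I$ free and $T(K_\Sigma)^0=T(K_\Sigma)^1$. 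This is the Endo--Miyata step, and it is where the cyclic wild inertia (i.e.\ metacyclic inertia) hypothesis is genuinely used.
\end{itemize}

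Your other two fallbacks do not close the gap. The first is not an argument as it stands: stable birational invariance compares defects of tori you already have, but it does not produce a flasque or coflasque torus split by $K'$ with the right defect. The third does not say which quasi-trivial factor to add; the construction above is precisely such a choice.

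With this $P$, the rest of what you wrote goes through, since it only uses that $P$ is quasi-trivial and split by $K'$. Your direct diagram chases are a valid, self-contained replacement for the paper's appeal to \cite[Theorem~2.12]{borovoi2025defectweakapproximationreductive}. For $G$, you should also note that $H^1(K,\tilde G)=1$, by Shapiro and the specialness of $\mathrm{SL}_N$, so that $G(K)\to H^1(K,\mu)$ is onto. Your $G$ is essentially the paper's.
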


\begin{proof}
    The left-hand isomorphisms are a consequence of Proposition~\ref{ThmCorpsNbr}. Indeed, $G$ and $T$ have cyclic wild inertia since they are split over $K'$, and thus over $K'_{\Sigma'}$.

    There exists a finite family of integers $(n_i)$ such that $\mu_{K'}\cong \prod \mu_{n_i}$. We then embed $\mu$ into the center of $G'\colonequals \mathrm{R}_{K'/K}(\prod \mathrm{SL}_{n_i})$. Setting $G\colonequals  G'/\mu$, we obtain a central isogeny with kernel $\mu$, where $G$ is a semisimple group (this was already known to Serre, see~\cite[p.158]{CohoGalois}). On the other hand, according to \cite[Proposition 1.3]{CTS}, one can find an exact sequence ${1\rightarrow \mu \rightarrow P \rightarrow T \rightarrow 1}$ such that $P$ is quasi-trivial and $T$ is coflasque. We can also observe that, by construction, $P$ and $T$ can be chosen to be split over $K'$. \cite[Theorem 2.12]{borovoi2025defectweakapproximationreductive} then gives the right-hand isomorphisms.

    Finally, Lemma~\ref{FlasqueTame} shows that $T(K_\Sigma)^0=T(K_\Sigma)^1$.
\end{proof}

\subsection{An example in the case of number fields}

\begin{pp}[{Study of a number field}]\label{LeCorpsDeNombres}
Let $K\colonequals \QQ$, $K'\colonequals \QQ(\sqrt{3},\sqrt{11})$, and $\Sigma\colonequals \{3\}$. We thus have $K_\Sigma=\QQ_3$. As for $K'_{\Sigma'}$, we have:
$$K'_{\Sigma'}\colonequals K'\otimes_\QQ \QQ_3 \cong \QQ[X,Y]/(X^2-3,Y^2-11) \otimes_\QQ \QQ_3 \cong \QQ_3[X,Y]/(X^2-3,Y^2-11).$$ 
Furthermore, $X^2-3$ is irreducible over $\QQ_3$ since $3$ is a uniformizer. Moreover, $X^2-11$ has no root in $\ZZ/3\ZZ$, and therefore none in $\ZZ_3(\sqrt{3})$. Since $X^2-11\in \ZZ_3(\sqrt{3})[X]$, it also has no root in $\QQ_3(\sqrt{3})$. Consequently, $\QQ_3[X,Y]/(X^2-3,Y^2-11)=\QQ_3(\sqrt{3},\sqrt{11})$ is a field. As a result, $\Sigma'=\{\sqrt{3}\}$ and $K'_{\Sigma'}\cong \QQ_3(\sqrt{3},\sqrt{11})$, which is of dimension $4$ and, of course, has Galois group $\ZZ/2\ZZ\times \ZZ/2\ZZ$. Therefore, $K'_{\Sigma'}/K_\Sigma$ is tamely ramified.
\medskip
    
Let us find the prime numbers for which $\QQ(\sqrt{3},\sqrt{11})/\QQ$ is ramified. Of course, all decomposition groups are cyclic in the unramified case. According to~\cite[Chapter 2, Exercise 42.(f)]{Marcus}, the discriminant of this extension is $16\times 3^2 \times 11^2$. The prime numbers dividing the discriminant, that is, the ramified primes, are then $2$, $3$, and $11$.

\begin{itemize}[leftmargin=0.5cm]
    \item At $p=2$, we note that $P\colonequals X^2-33$ has $1$ as a root modulo $8$ (and ${P'(1)=2 \pmod {4}}$). This means that $|P(1)|_2 \leq \frac{1}{8} < |P'(1)|_2^2 =\left(\frac{1}{2}\right)^2$. Newton's method then yields a root in $\QQ_2$. We also note that $X^2-3$ has no root in $\ZZ/4\ZZ$, hence no root in $\ZZ_2$, and consequently no root in $\QQ_2$ since $X^2-3\in \ZZ_2[X]$.
    Thus, $X^2-3$ is irreducible over $\QQ_2$. We then obtain:
    $$K'\otimes_\QQ \QQ_2 \cong \QQ_2[X,Y]/(X^2-3,Y^2-11) \cong \left(\QQ_2[\sqrt{3}]\right)[Y]/(Y^2-(\frac{\sqrt{33}}{\sqrt{3}})^2) \cong \left(\QQ_2(\sqrt{3})\right)^2.$$
    The two local fields of $K'$ lying over $2$ are therefore cyclic extensions of degree $2$. 
    \medskip

    \item At $p=11$, $P\colonequals X^2-3$ has $5$ as a root modulo $11$ (and $P'(5)\not=0 \pmod {11}$). By Hensel's lemma, $P\colonequals X^2-3$ has a root in $\QQ_{11}$. Furthermore, $X^2-11$ is irreducible over $\QQ_{11}$ since $11$ is a uniformizer. We then have:
    $$K'\otimes_\QQ \QQ_{11} \cong \QQ_{11}[X,Y]/(X^2-3,Y^2-11) \cong \left(\QQ_{11}(\sqrt{11})\right)[X]/(X^2-(\sqrt{3})^2) \cong \left(\QQ_{11}(\sqrt{11})\right)^2.$$
    Thus, the two local fields of $K'$ lying over $11$ are cyclic extensions of degree $2$.
\end{itemize}
\end{pp}

\begin{prop}\label{ContreExempleCorpsDeNombre}
    There exist a semisimple $\QQ$-group $G$ and a $\QQ$-torus $T$ with $T(\QQ_3)^0=T(\QQ_3)^1$, both split over $\QQ(\sqrt{3},\sqrt{11})$, such that:
    $$0\not = \ZZ/2\ZZ \overset{\sim}{\leftarrow} G(\QQ_3)/\overline{G(\QQ)}\overset{\sim}{\rightarrow}G(\QQ_3)/G(\QQ)\,G(\QQ_3)^0 \text{\hspace{2pt} and }$$
    $$0\not = \ZZ/2\ZZ \overset{\sim}{\leftarrow} T(\QQ_3)/\overline{T(\QQ)}\overset{\sim}{\rightarrow}T(\QQ_3)/T(\QQ)\,T(\QQ_3)^0.$$
\end{prop}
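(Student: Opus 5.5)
The plan is to apply Proposition~\ref{stratégie} with $K=\QQ$, $K'=\QQ(\sqrt{3},\sqrt{11})$ and $\Sigma=\{3\}$, so that $K_\Sigma=\QQ_3$. By \S\ref{LeCorpsDeNombres}, $K'_{\Sigma'}=\QQ_3(\sqrt{3},\sqrt{11})$ is a field and is tamely ramified over $\QQ_3$. Its wild inertia is therefore trivial, hence cyclic, and the hypothesis of Proposition~\ref{stratégie} holds. That proposition then produces a semisimple $G$ and a torus $T$ with $T(\QQ_3)^0=T(\QQ_3)^1$, both split over $K'$. It also identifies both quotients in the statement, via the two left-hand isomorphisms, with $\coker\bigl(H^1(\QQ,\mu)\to H^1(\QQ_3,\mu)\bigr)$. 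The whole proof thus reduces to choosing a finite $\mu$ \emph{split over $K'$} for which this cokernel is $\ZZ/2\ZZ$.

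Lemma~\ref{CalculCokerMu} is not usable as it stands. Since only $\mu_2$ is split over $K'$, the natural choice is $n=2$ and $\mu=R_{K'/\QQ}(\mu_2)/\mu_2$. For this $\mu$ we have $d_{\{3\}}=4$. By \S\ref{LeCorpsDeNombres}, every other place has cyclic decomposition group: order $2$ at $2$ and $11$, and order $1$ at $\infty$ because $\sqrt3$ and $\sqrt{11}$ are real. Hence $d_{\overline{\Sigma}}=2$ and the lemma's quotient has order $\gcd(4,2)/\gcd(4,2,2)=1$, i.e.\ the cokernel is trivial. I would instead take the Galois-module dual choice $\mu\colonequals\ker\bigl(R_{K'/\QQ}(\mu_2)\xrightarrow{N}\mu_2\bigr)$, which is still split over $K'$. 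Its character module is $M=\FF_2[\Gamma]/\FF_2\cdot N_\Gamma$ with $\Gamma=\Gal(K'/\QQ)\cong(\ZZ/2\ZZ)^2$.

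To compute the cokernel I would use Poitou--Tate duality for the finite module $\mu$. It identifies $\coker\bigl(H^1(\QQ,\mu)\to H^1(\QQ_3,\mu)\bigr)$ with the Pontryagin dual of $\Sha^1_{\{3\}}(\QQ,M)/\Sha^1(\QQ,M)$. Here $\Sha^1_{\{3\}}$ denotes the classes that are locally trivial at every place except $3$. I would then restrict to $\Gamma$ via inflation--restriction. By Chebotarev, the classes locally trivial outside $3$ come from $\ker\bigl(H^1(\Gamma,M)\to\prod_{C}H^1(C,M)\bigr)$, where $C$ runs over the cyclic subgroups of $\Gamma$; all of these occur as decomposition groups away from $3$. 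The only noncyclic decomposition group is the one at $3$, which is all of $\Gamma$. Consequently $\Sha^1(\QQ,M)=0$, and $\Sha^1_{\{3\}}(\QQ,M)$ equals the algebraic group $\Sha^1_\omega(\Gamma,M)$.

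It remains to compute $\Sha^1_\omega\bigl((\ZZ/2)^2,\FF_2[\Gamma]/\FF_2 N\bigr)$. I would use the long exact sequence of $0\to\FF_2 N\to\FF_2[\Gamma]\to M\to0$. Since $H^1(\Gamma,\FF_2[\Gamma])=0$, this gives $H^1(\Gamma,M)=\ker\bigl(H^2(\Gamma,\FF_2)\to H^2(\Gamma,\FF_2[\Gamma])=0\bigr)=H^2(\Gamma,\FF_2)$, which is $(\ZZ/2)^3$. For each cyclic $C$, the analogous sequence for the restriction gives $H^1(C,M)\cong H^2(C,\FF_2)$ modulo the image of $H^1(C,\FF_2[\Gamma])$. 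A class lies in $\Sha^1_\omega$ exactly when it restricts to zero there for every $C$. I expect the answer to be the one-dimensional space spanned by the cup product $\chi_1\cup\chi_2$, which vanishes on every cyclic subgroup. This gives $\ZZ/2\ZZ$ and concludes the proof.

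The main obstacle is this last group-cohomology computation. In particular one must check carefully the contribution of $H^1(C,\FF_2[\Gamma])$, which is nonzero because $\FF_2[\Gamma]$ restricted to $C$ is free only over $\FF_2[C]$. One must also confirm that the $\Gamma$-level description of $\Sha^1_{\{3\}}$ is valid, i.e.\ that inflation from $\Gamma$ captures all classes. If the computation misbehaves, I would switch to the norm-one torus $R^{(1)}_{K'/\QQ}(\GG_m)$ and its known defect $\ZZ/2\ZZ$ at the unique noncyclic place, realising it as a quotient $P/\mu$ as in the proof of Proposition~\ref{stratégie}.
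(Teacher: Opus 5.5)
You apply Proposition~\ref{stratégie} with $K'=\QQ(\sqrt3,\sqrt{11})$ and $\Sigma=\{3\}$, which is the right setup. However, your reason for abandoning Lemma~\ref{CalculCokerMu} misreads ``split''. A group of multiplicative type is split over $K'$ when $\Gal(\overline{\QQ}/K')$ acts trivially on its \emph{character module}, not on its points. Now $\mu_4$ is diagonalizable over $\QQ$: its character group is $\ZZ/4\ZZ$ with trivial action. Hence $\mu\colonequals R_{K'/\QQ}(\mu_4)/\mu_4$, whose character module is $\ker(\ZZ/4\ZZ[\Gamma]\to\ZZ/4\ZZ)$, is split over $K'$. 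With your own values $d_\Sigma=4$ and $d_{\overline{\Sigma}}=2$, Lemma~\ref{CalculCokerMu} for $n=4$ gives a cokernel of order $\gcd(4,4)/\gcd(4,2,4)=2$. Proposition~\ref{stratégie} then finishes, and this is the paper's entire proof.

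The route you take instead has a genuine gap: for $\mu=\ker\bigl(R_{K'/\QQ}(\mu_2)\xrightarrow{N}\mu_2\bigr)$ the cokernel is $0$, not $\ZZ/2\ZZ$. Your Poitou--Tate and Chebotarev reductions are correct, so everything hinges on $\ker\bigl(H^1(\Gamma,M)\to\prod_C H^1(C,M)\bigr)$. Contrary to what you say, $H^1(C,\FF_2[\Gamma])=0$ for every cyclic $C$, because a free $\FF_2[C]$-module is induced and hence cohomologically trivial. So the connecting maps give $H^1(\Gamma,M)\cong H^2(\Gamma,\FF_2)$ and $H^1(C,M)\cong H^2(C,\FF_2)$, compatibly with restriction. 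Write $H^*(\Gamma,\FF_2)=\FF_2[x_1,x_2]$ and $H^*(C,\FF_2)=\FF_2[y]$ for $C$ of order $2$. The class $ax_1^2+bx_1x_2+cx_2^2$ restricts to $ay^2$, $cy^2$ and $(a+b+c)y^2$ on the three subgroups of order $2$, so the kernel is zero. In particular, $x_1\cup x_2$ does not vanish on every cyclic subgroup. It restricts to $y^2\neq 0$ on the subgroup of order $2$ where both $x_1$ and $x_2$ are nontrivial. Indeed, it classifies the dihedral group of order $8$, in which that element lifts to an element of order $4$. Equivalently, a central extension of $(\ZZ/2\ZZ)^2$ by $\ZZ/2\ZZ$ that splits over every cyclic subgroup has exponent $2$, hence is split.

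Your fallback does not repair this either, for three reasons:
\begin{enumerate}
\item $R^{(1)}_{K'/\QQ}(\GG_m)$ is not of the form $P/\mu$ with $P$ quasi-trivial, since its rational cocharacter representation has no trivial constituent.
\item It provides no semisimple $G$.
\item At $3$, its $(T_*)_I\cong\ZZ/2\ZZ\times\ZZ$ has $\Galnr$-invariant torsion, so $T(\QQ_3)^0\neq T(\QQ_3)^1$, violating the statement's hypothesis.
\end{enumerate}
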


\pagebreak

\begin{proof}
    Let us resume the context of $\S$\ref{LeCorpsDeNombres}. Under the notation of Lemma~\ref{CalculCokerMu}, we have $d_\Sigma=4$ and $d_{\overline{\Sigma}}=2$. Set $\mu\colonequals R_{K'/K}(\mu_4)/\mu_4$. This same lemma then yields
    $$\coker \left(H^1(K,\mu)\rightarrow H^1(K_\Sigma,\mu)\right)\overset{\sim}{\rightarrow} \ZZ/\frac{\gcd(d_\Sigma,4)}{\gcd(d_\Sigma,d_{\overline{\Sigma}},4)} \ZZ = \ZZ/2\ZZ.$$
    
    The result then follows from Proposition~\ref{stratégie}.
\end{proof}

\subsection{An example in the case of function fields}

\begin{pp}[{Study of a function field}]
Let $p$ be a prime such that $p\equiv 3 \pmod 4$. Let ${K\colonequals \FF_p(t)}$, $K'\colonequals \FF_p(\sqrt{t},\sqrt{1-t})$, and $\Sigma\colonequals \{t^{-1}\}$. We thus have $K_\Sigma=\FF_p(\!(t^{-1})\!)$. As for $K'_{\Sigma'}$, we have:
$$K'_{\Sigma'}\colonequals K'\otimes_{\FF_p(t)} \FF_p(\!(t^{-1})\!) \cong \FF_p(\!(t^{-1})\!)[X,Y]/\left(X^2-t,Y^2-(1-t)\right).$$

Note that $X^2-t$ is irreducible over $\FF_p(\!(t^{-1})\!)$ since, otherwise, this would imply the existence of an element with valuation $-\frac{1}{2}$, which is absurd.
Next, observe that ${1-t=(-1)\,t\,(1-t^{-1})}$. We know that $t$ is a square in $\FF_p(\!(t^{-\frac{1}{2}})\!)$. Furthermore, $1-t^{-1}$ is also a square in $\FF_p(\!(t^{-\frac{1}{2}})\!)$ by Hensel's lemma, since it is a square over $\FF_p$ when reduced modulo $t^{-\frac{1}{2}}$. Consequently, $X^2-(1-t)$ is irreducible over $\FF_p(\!(t^{-\frac{1}{2}})\!)$ if and only if $-1$ is not a square. This is indeed the case, since the condition ${p\equiv 3 \pmod 4}$ implies that $-1$ is not a square in $\FF_p$.
We thus deduce that $\Sigma'=\{t^{-\frac{1}{2}}\}$ and that $K'_{\Sigma'}$ is a field isomorphic to $\FF_{p^2}(\!(t^{-\frac{1}{2}})\!)$. It is a tamely ramified extension of $\FF_p(\!(t^{-1})\!)$ with Galois group \linebreak $\ZZ/2\ZZ\times \ZZ/2\ZZ$.
\medskip

Note that $K$ can potentially only ramify at $t$, $1-t$, and $t^{-1}$. The other places are therefore unramified, hence have a cyclic local Galois group, and thus have degree at most $2$. Indeed, at any other place, $t$ and $1-t$ are units, so their reductions over $\FF_p$ are non-trivial. Consequently, the reductions in $\FF_p[X]$ of the polynomials $X^2-t$ and $X^2-(1-t)$ are separable ($p\not = 2$). Thus, they each admit a root in $\FF_p^\mathrm{sep}$, and therefore over the maximal unramified extension by Hensel's lemma. Hence, the local extensions are unramified.

\begin{itemize}[leftmargin=0.5cm]
    \item At $t$, $X^2-t$ is of course irreducible since $t$ is a uniformizer. On the other hand, $X^2-(1-t)$ admits a root over $\FF_p$ when reduced modulo $t$. Hensel's lemma then ensures that it has a root in $\FF_p(\!(t)\!)$. We then obtain:
    $$K'\otimes_{\FF_p(t)} \FF_p(\!(t)\!) \cong \FF_p(\!(t)\!)[X,Y]/\left(X^2-t,Y^2-(1-t)\right) \cong \FF_p(\!(t^{\frac{1}{2}})\!)[X]/\left(X^2-(\sqrt{1-t})^2\right) \cong \left(\FF_p(\!(t^{\frac{1}{2}})\!)\right)^2.$$
    The two local fields of $K'$ lying over $t$ are therefore cyclic extensions of degree $2$.
    \medskip

    \item At $1-t$, the polynomial $X^2-(1-t)$ is irreducible since $1-t$ is a uniformizer. Also, since $t = 1 - (1-t)$, the polynomial $X^2-t$ admits a root over $\FF_p$ when reduced modulo $1-t$. Hensel's lemma then yields one in $\FF_p(\!(1-t)\!)$. We then have:
    \[\begin{adjustbox}{max size={1\textwidth}{1\textheight}}
    $K'\otimes_{\FF_p(t)} \FF_p(\!(1-t)\!) \cong \FF_p(\!(1-t)\!)[X,Y]/\left(X^2-t,Y^2-(1-t)\right) \cong \FF_p\left(\!\left((1-t)^{\frac{1}{2}} \right)\!\right)[X]/\left(X^2-(\sqrt{t})^2\right) \cong \FF_p\left(\!\left((1-t)^{\frac{1}{2}}\right)\!\right)^2.$
    \end{adjustbox} \]
    Thus, the two local fields of $K'$ lying over $1-t$ are cyclic extensions of degree $2$.
\end{itemize}
\end{pp}

\begin{prop}\label{ContreExempleCorpsDeFonction}
    Let $p$ be a prime such that $p\equiv 3 \pmod 4$. 
    There exist a semisimple $\FF_p(t)$-group $G$ and an $\FF_p(t)$-torus $T$ with $T\left(\FF_p(\!(t^{-1})\!)\right)^0=T\left(\FF_p(\!(t^{-1})\!)\right)^1$, both split over $\FF_p(\sqrt{t},\sqrt{1-t})$, such that:
    $$0\not = \ZZ/2\ZZ \overset{\sim}{\leftarrow} G\left(\FF_p(\!(t^{-1})\!)\right)/\overline{G(\FF_p(t))}\overset{\sim}{\rightarrow}G\left(\FF_p(\!(t^{-1})\!)\right)/G(\FF_p(t))\,G\left(\FF_p(\!(t^{-1})\!)\right)^0 \text{\hspace{2pt} and }$$
    $$0\not = \ZZ/2\ZZ \overset{\sim}{\leftarrow} T\left(\FF_p(\!(t^{-1})\!)\right)/\overline{T(\FF_p(t))}\overset{\sim}{\rightarrow}T\left(\FF_p(\!(t^{-1})\!)\right)/T(\FF_p(t))\,T\left(\FF_p(\!(t^{-1})\!)\right)^0.$$
\end{prop}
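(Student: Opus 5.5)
The argument follows the proof of Proposition~\ref{ContreExempleCorpsDeNombre}, using the study of the function field $K=\FF_p(t)$, $K'=\FF_p(\sqrt{t},\sqrt{1-t})$, $\Sigma=\{t^{-1}\}$ carried out just above. The plan is to apply Proposition~\ref{stratégie} with $\mu\colonequals R_{K'/K}(\mu_4)/\mu_4$. First we check its hypotheses. $K'/K$ is a finite Galois extension with group $\ZZ/2\ZZ\times\ZZ/2\ZZ$, and $\mu$ is a finite étale $K$-group of multiplicative type split over $K'$. It is étale since $p\neq 2$, so $\mathrm{char}(K)=p\nmid 4$. We showed that $K'_{\Sigma'}\cong\FF_{p^2}(\!(t^{-\frac12})\!)$ is a field and a tamely ramified extension of $K_\Sigma=\FF_p(\!(t^{-1})\!)$. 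Its wild inertia is therefore trivial, hence cyclic. Residue fields of $\FF_p(t)$ are finite, hence perfect, as required in the Notations.

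Proposition~\ref{stratégie} then gives a semisimple $K$-group $G$ and a $K$-torus $T$, both split over $K'$, with $T(K_\Sigma)^0=T(K_\Sigma)^1$. For both groups it identifies $X(K_\Sigma)/\overline{X(K)}$ and $X(K_\Sigma)/X(K)\,X(K_\Sigma)^0$ with $\coker\left(H^1(K,\mu)\rightarrow H^1(K_\Sigma,\mu)\right)$. The first identification is given in the form stated, namely as an isomorphism from the closure quotient to the quotient by $X(K)\,X(K_\Sigma)^0$, which is Proposition~\ref{ThmCorpsNbr}.

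It remains to compute this cokernel with Lemma~\ref{CalculCokerMu}, taking $n=4$. The only place in $\Sigma$ is $t^{-1}$, where the local degree is $4$, so $d_\Sigma=4$. For $d_{\overline{\Sigma}}$ we use the analysis of the other places. At $t$ and at $1-t$ the local fields of $K'$ are quadratic. Every other place is unramified, so its decomposition group is a cyclic subgroup of $\ZZ/2\ZZ\times\ZZ/2\ZZ$ and has order at most $2$. The lcm is therefore $d_{\overline{\Sigma}}=2$; it is exactly $2$ and not $1$ because the places $t$ and $1-t$ contribute degree $2$. Lemma~\ref{CalculCokerMu} gives the cokernel $\ZZ/\frac{\gcd(4,4)}{\gcd(4,2,4)}\ZZ=\ZZ/2\ZZ$, which is nonzero. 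This proves both chains of isomorphisms.

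The main point requiring care is this determination of the local degrees, especially that no place outside $\Sigma$ has degree $4$. It relies on the unramifiedness argument above: an unramified extension of a local field with finite residue field is cyclic, and $\ZZ/2\ZZ\times\ZZ/2\ZZ$ has no cyclic subgroup of order $4$. Everything else is a direct invocation of the earlier results.
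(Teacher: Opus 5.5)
Your proposal is correct and follows the paper's own argument: the paper proves this by declaring the proof identical to that of Proposition~\ref{ContreExempleCorpsDeNombre}, namely applying the general construction with $\mu = R_{K'/K}(\mu_4)/\mu_4$ and computing the cokernel via Lemma~\ref{CalculCokerMu} with $d_\Sigma=4$ and $d_{\overline{\Sigma}}=2$. Your explicit checks spell out what the paper leaves implicit: tameness at $t^{-1}$, perfect residue fields, $p\nmid 4$, and that no place outside $\Sigma$ has local degree $4$.
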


\begin{proof}
    The proof is identical to that of Proposition~\ref{ContreExempleCorpsDeNombre}.
\end{proof}

\pagebreak

\section{Counterexamples to the surjectivity of the Kottwitz morphism}\label{SectionCEXSurjectivite}

Here, we consider $K=\RR(t)$ and $\Sigma=\{t\}$, so that $K_\Sigma=\RR(\!(t)\!)$, which is denoted hereafter by $\Kh$. \linebreak In this section, we construct two examples of tori $T$ over $K$ that both satisfy the equality $T(\Kh)=T(K)\,T(\Kh)^0$, but for which the Kottwitz morphism $\kappa_{T_{\Kh}}$ is not surjective \linebreak (see Propositions~\ref{1erCEX} and \ref{2eCEX}).

\begin{pp}[{Context of the counterexamples}]\label{LeContexteDesContreExemples}
Let $L\colonequals \CC(u)$, $\Lh\colonequals \CC(\!(u)\!)$, $\Oc_{\Lh}\colonequals  \CC[[u]]$, and $\Oc_{\Kh}\colonequals  \RR[[t]]$, where $u$ is such that $u^2=t$. Denote by $v_{\Lh}$ the valuation of $\Lh$. Consider ${\Gamma\colonequals \Gal(L/K)\cong\Gal(\Lh/\Kh)\cong\ZZ/2\ZZ \times \ZZ/2\ZZ}$ generated by $\sigma:i\mapsto -i$ and $\tau:u\mapsto -u$. Note that $\Khnr = \CC(\!(t)\!)$. In what follows, it is harmless to assume that $I$ denotes $\Gal(\Lh/\Khnr)$.
\medskip

Let $T$ be a $\Kh$-torus split over $\Lh$. Recall that we have an identification of $\Gamma$-modules ${X\otimes L^\times \overset{\sim}{\rightarrow} T(\Lh)}$ given by $\chi\otimes l \mapsto \chi(l)$. Similarly, $X\otimes \Oc_{\Lh}^\times \overset{\sim}{\rightarrow} T(\Lh)^1$. Note also that the decomposition $\Oc_{\Lh}^\times\cong \CC^\times \times U^1$ (where $U^1$ is the subgroup of $\Oc_{\Lh}^\times$ with constant term $1$) is $\Gamma$-invariant and thus induces the $\Gamma$-invariant decomposition $X\otimes \Oc_{\Lh}^\times \overset{\sim}{\rightarrow} (X\otimes \CC^\times) \oplus (X\otimes U^1)$.
\medskip

Recall from~\cite[Proposition 11.1.1]{kaletha_prasad_2023} that we have the following diagram:
\begin{equation}
\begin{tikzcd}
    {T(\Lh)} & X \\
    {T(\Khnr)} & {X_I}
    \arrow["{\id\otimes v_{\Lh}}", from=1-1, to=1-2]
    \arrow["{N_{\Lh/\Khnr}}"'{pos=0.4}, two heads, from=1-1, 
to=2-1]
    \arrow["{\overline{\cdot}}", two heads, from=1-2, to=2-2]
    \arrow["{\kappa_{T_{\Khnr}}}"', two heads, from=2-1, 
to=2-2]
\end{tikzcd}
\end{equation}
where $N$ is the norm and $\overline{\cdot}\colonequals X\rightarrow X_I$ is the canonical projection. Consequently, ${N(T(\Lh)^1)=T(\Khnr)^0}$. This yields the decomposition $T(\Khnr)^0 = N(X\otimes \CC^\times) \oplus N(X\otimes U^1)$. Furthermore, since $\CC^\times$ is a divisible $\tau$-invariant group, we have $N(X\otimes \CC^\times)=N(X)\otimes \CC^\times = X^I \otimes \CC^\times$. Finally, since $U^1$ has a structure of a $\QQ$-vector space, $\widehat{H}^0(I,X\otimes U^1)=0$ and thus $N(X\otimes U^1)=(X\otimes U^1)^I$. Whence finally:
\begin{equation}
T(\Khnr)^0\cong(X^I \otimes \CC^\times) \oplus (X\otimes U^1)^I
\end{equation}
Note also that this decomposition is $\Galnr$-invariant.
\end{pp}

\subsection{First counterexample}\label{Section1erCEX}

\begin{pp}[{The Galois module of the norm torus}]\label{Contexte1erCEX}
Consider the norm torus $T\colonequals R^{(1)}_{L/K}(\GG_m)$. Its cocharacter module $X$ is the zero-trace sublattice of $\ZZ[\Gamma]$. In other words, if we denote by $(e_1,e_\tau,e_\sigma,e_{\sigma \tau})$ the natural basis of $\ZZ[\Gamma]$, we have:
$$X\colonequals \{ \sum_{\alpha \in \Gamma} a_\alpha e_\alpha \in \ZZ[\Gamma] \mid \sum_{\alpha \in \Gamma} a_\alpha = 0 \}.$$
One checks that a basis of $X$ is given by $(a,b,c)\colonequals (e_1-e_\tau,e_\tau-e_\sigma,e_\sigma - e_{\sigma\tau})$. The actions of $\sigma$ and $\tau$ on the basis are then given by the following matrices:
$$\text{for } \sigma:\begin{pmatrix}
0 & -1 & 1\\
0 & -1 & 0\\
1 & -1 & 0
\end{pmatrix} \text{ and for } 
\tau:\begin{pmatrix}
-1 & 1 & 0\\
0 & 1 & 0\\
0 & 1 & -1
\end{pmatrix}.
$$

Let us then compute $X_I$. By definition, we have:
\begin{align*}
    \overline{a}&=\tau(\overline{a})=\overline{\tau(a)}=-\overline{a}\\
    \overline{b}&=\tau(\overline{b})=\overline{\tau(b)}=\overline{a+b+c}\\
    \overline{c}&=\tau(\overline{c})=\overline{\tau(c)}=-\overline{c}.
\end{align*}
Whence the relations $2\overline{a}=0$ and $\overline{a}=\overline{c}$. Consequently, $X_I\cong \ZZ/2\ZZ \times \ZZ$ with basis $(\overline{a},\overline{b})$. \linebreak Furthermore, the action of $\sigma$ on $X_I$ is then given by the relations:
\begin{align*}
    \sigma(\overline{a})&=\overline{\sigma(a)}=\overline{c}=\overline{a}\\
    \sigma(\overline{b})&=\overline{\sigma(b)}=\overline{-(a+b+c)}=-\overline{b}.
\end{align*}
We then deduce that $(X_I)^{\Galnr}\cong\ZZ/2\ZZ$ with generator $\overline{a}$.
\medskip

This time, let us compute $X^I$. Take $n_1a+n_2b+n_3c \in X^I$. We have:
$$n_1a+n_2b+n_3c=\tau(n_1a+n_2b+n_3c)=-n_1a + n_2(a+b+c)-n_3c=(n_2-n_1)a+n_2b+(n_2-n_3)c.$$
We deduce that $2n_1=2n_3=n_2$. Thus $X^I\cong \ZZ$ and a generator is given by $E\colonequals a+2b+c$. \linebreak Moreover, $\sigma(E)=\sigma(a)+2\sigma(b)+\sigma(c)=c-2(a+b+c)+a=-E$. Whence the action of $\sigma$ on $X^I$.
\end{pp}

\begin{prop}\label{1erCEX}
    The norm $K$-torus $T\colonequals R^{(1)}_{L/K}(\GG_m)$ is such that 
    $$0\cong\Ima(\kappa_{T_{\Kh}}) \subsetneq ((T_*)_I)^{\Galnr}\cong \ZZ / 2\ZZ.$$
    In particular, $T(\Kh)=T(\Kh)^0=T(K)\,T(\Kh)^0$. 
\end{prop}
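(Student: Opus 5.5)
We need to show that the image of $\kappa_{T_{\Kh}}$ is trivial. By \S\ref{Contexte1erCEX}, $((T_*)_I)^{\Galnr}\cong\ZZ/2\ZZ$, generated by $\overline{a}$. Since $\ker\kappa_{T_{\Kh}}=T(\Kh)^0$, the equality $\Ima(\kappa_{T_{\Kh}})=0$ is the same as $T(\Kh)=T(\Kh)^0$, i.e. $T(\Kh)^0=T(\Kh)^1=T(\Kh)$. (Here $T(\Kh)^1=T(\Kh)$ because $T$ is anisotropic over $\Kh$: $X^\Gamma=0$, since already $X^I\cong\ZZ E$ and $\sigma(E)=-E$.) Once $T(\Kh)=T(\Kh)^0$ is known, $T(\Kh)=T(K)\,T(\Kh)^0$ follows at once.

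\textbf{Step 1: compute both sides via $\Galnr=\langle\sigma\rangle$.} We compare $T(\Kh)=T(\Khnr)^{\Galnr}$ with $(T(\Khnr)^0)^{\Galnr}=T(\Kh)^0$. The Kottwitz map on $\Khnr$ gives the exact sequence
$$1\to T(\Khnr)^0\to T(\Khnr)\to X_I\to 1,$$
which is surjective by the diagram in \S\ref{LeContexteDesContreExemples}. Taking $\sigma$-invariants yields
$$T(\Kh)\to (X_I)^{\Galnr}\cong\ZZ/2\ZZ\to H^1(\Galnr,T(\Khnr)^0).$$
So it suffices to show that the connecting map $\delta$ sends $\overline{a}$ to a nonzero class in $H^1(\Galnr,T(\Khnr)^0)$. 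Note that \cite[Lemma 11.7.1]{kaletha_prasad_2023} is unavailable here, since the residue field $\RR$ is not finite; that is exactly why nonsurjectivity can occur.

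\textbf{Step 2: compute $\delta(\overline{a})$.} First lift $\overline{a}$ to $T(\Khnr)$. Using the norm diagram, take $a\otimes u\in T(\Lh)$ and its norm $y\colonequals N_{\Lh/\Khnr}(a\otimes u)$, which maps to $\overline{a}$. Then compute the cocycle value $\sigma(y)y^{-1}\in T(\Khnr)^0$. Since $\sigma(u)=u$, we get $\sigma(y)=N(\sigma(a)\otimes u)$ with $\sigma(a)=c$, so
$$\sigma(y)y^{-1}=N((c-a)\otimes u)=(c-a)\otimes u\cdot \tau\bigl((c-a)\otimes u\bigr).$$
Here $\tau(c-a)=-(c-a)$ and $\tau(u)=-u$, so this equals $(c-a)\otimes(u\cdot(-u)^{-1})=(c-a)\otimes(-1)$. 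This element lies in the factor $X^I\otimes\CC^\times$ of the decomposition $T(\Khnr)^0\cong(X^I\otimes\CC^\times)\oplus(X\otimes U^1)^I$. More precisely, $(c-a)\otimes(-1)$ with $c-a\notin X^I$ must be rewritten through the norm, $N(c\otimes i)$-type expressions, to land in $X^I\otimes\CC^\times$. I expect it to become $E\otimes(-1)$ or a related element up to coboundary.

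\textbf{Step 3: show the class is nontrivial.} The decomposition is $\Galnr$-stable, so $H^1$ splits, and the relevant factor is $H^1(\langle\sigma\rangle,\ZZ E\otimes\CC^\times)$ with $\sigma$ acting by $E\mapsto -E$ together with complex conjugation. For a cyclic group of order 2, $H^1=\ker(N)/\mathrm{im}(\sigma-1)$. On $\CC^\times$ with the twisted action $z\mapsto\bar z^{-1}$, we have $\ker N=\{z:z\bar z^{-1}\cdots\}$, and $H^1$ is computed as $\widehat H^0$ of the untwisted action, namely $\RR^\times/N(\CC^\times)=\RR^\times/\RR_{>0}\cong\ZZ/2\ZZ$, with $-1$ representing the nonzero class. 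So the goal is to show that the cocycle $-1$ is not of the form $z/\sigma(z)$ in this twisted sense. The main obstacle is the bookkeeping in Step 2: identifying the cocycle precisely inside $X^I\otimes\CC^\times$ and checking that it is not killed by the $U^1$-component or by a coboundary. As a fallback, I would show directly that $T(\Kh)=T(\Kh)^0$ by proving that every $x\in T(\Kh)\subset T(\Khnr)$ has even image $\kappa(x)\in X_I$ in the $\ZZ/2$ component. For this, write $x$ as a norm from $\Lh$ and use that $\sigma$-invariance forces the relevant leading coefficient to be a norm from $\CC$, hence positive.
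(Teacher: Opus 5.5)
Your strategy is the paper's own. It uses the same lift $y=N_{\Lh/\Khnr}(a\otimes u)=a(-1)$ of $\overline a$ and the same cocycle; yours is $\sigma(y)y^{-1}$, the inverse of the paper's $y\sigma(y)^{-1}$, which is irrelevant for a $2$-torsion value. It uses the same $\Galnr$-stable decomposition~(2) to reduce to the summand $X^I\otimes\CC^\times$. The final obstruction is the same too: $-1\notin\RR_{>0}$ is exactly the paper's contradiction $|x|^2=-1$. Your computation $\sigma(y)y^{-1}=(c-a)\otimes(-1)$ is correct, and so is $H^1(\langle\sigma\rangle,\ZZ E\otimes\CC^\times)\cong\RR^\times/\RR_{>0}$. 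The twisted action is $z\mapsto\bar z^{-1}$, the cocycles are $\RR^\times$, and the coboundaries are $\{|w|^2\}=\RR_{>0}$.

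As written, however, the argument stops exactly where nontriviality is decided. You only say you \emph{expect} the cocycle to become $E\otimes(-1)$ ``or a related element up to coboundary''. That is not enough: if its value in $X^I\otimes\CC^\times\cong\CC^\times$ lay in $\RR_{>0}$ (for instance $E\otimes 1$), the class would vanish and the proposition would fail.

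The missing step is elementary and needs no rewriting through norms such as $N(c\otimes i)$. Since $(-1)^2=1$, the element $x\otimes(-1)\in X\otimes\CC^\times$ depends only on $x$ modulo $2X$. Moreover $c-a=E-2(a+b)$, so $(c-a)\otimes(-1)=E\otimes(-1)$ exactly; this is the paper's line $a(-1)c(-1)^{-1}=a(-1)c(-1)=a(-1)b(-1)^2c(-1)=E(-1)$. This identity settles both your worries at once. It shows that the cocycle lies in $X^I\otimes\CC^\times$ despite $c-a\notin X^I$. It also shows that under $E\otimes z\mapsto z$ the cocycle is $-1\in\RR^\times$, which is not of the form $|w|^2$, so its class is nonzero.

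The $U^1$-summand cannot cancel this, because $H^1$ splits along the $\Galnr$-stable decomposition, as you already noted. With this line inserted your proof is complete and coincides with the paper's, and the vague ``fallback'' argument is unnecessary.
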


\begin{proof}
Let us resume the context of $\S$\ref{Contexte1erCEX}. Set $l = a(u)$ and compute $N(l)$. We find:
$$N(l)= l \tau(l) = a(u)\left(\tau(a)(\tau(u))\right)=a(u)a^{-1}(-u)=a(u)a((-u)^{-1})=a(-1).$$
Furthermore, the commutativity of diagram~(1) shows that ${\kappa_{T_{\Khnr}}}(a(-1))=\overline{a} \in (X_I)^{\Galnr}$, since $v_{\Lh}(u)=1$. However, $\sigma(a(-1))=\sigma(a)(\sigma(-1))=c(-1)$. Consequently, $a(-1)\not \in T(\Kh)$. Let us show that $\overline{a}$ is not in the image of $\kappa_{T_{\Kh}}$.

Consider the cocycle $z:\alpha \mapsto a(-1) \alpha(a(-1))^{-1}$ with values in $T(\Khnr)^0$ (since ${\sigma(\overline{a})=\overline{a}}$). \linebreak This cocycle is a coboundary if and only if $\overline{a}\in \Ima(\kappa_{T_{\Kh}})$. Note also that:
$$a(-1) \sigma(a(-1))^{-1}=a(-1)c(-1)^{-1}=a(-1)c(-1)=a(-1)b(-1)^2c(-1)=E(-1) \in X^I \otimes \CC^\times.$$

For the sake of contradiction, suppose that $z$ is a coboundary. It can therefore be written as $\alpha \mapsto y \alpha(y)^{-1}$ with $y\in T(\Khnr)^0$. Let us use the decomposition~(2) to write $y=y_0u$. We then have $y \sigma(y)^{-1}=(y_0 \sigma(y_0)^{-1}) (u \sigma(u)^{-1})$.

By identifying the terms in the decomposition, we obtain $E(-1)=y_0 \sigma(y_0)^{-1}$. Furthermore, since $X^I = \langle E \rangle$, we can write $y_0=E(x)$ with $x\in \CC^\times$. Whence finally:
$$E(-1)=y_0 \sigma(y_0)^{-1}=E(x) \sigma(E)(\sigma(x)^{-1})=E(x)E(\sigma(x))=E(|x|^2).$$
We then deduce that $|x|^2 = -1$. This is a contradiction! Therefore, $z$ is not a coboundary and $\overline{a}\not \in \Ima(\kappa_{T_{\Kh}})$.

Since $\ZZ/2\ZZ \cong (X_I)^{\Galnr}=\langle\overline{a}\rangle$, we deduce that $\Ima(\kappa_{T_{\Kh}})=0$, and thus $T(\Kh)=T(\Kh)^0$. \linebreak The decomposition $T(\Kh)=T(K)\,T(\Kh)^0$ then follows trivially. 
\end{proof}

\subsection{Second counterexample}

\begin{pp}[{The Galois module of an induced torus over $\CC(t)$}]\label{Contexte2eCEX}
Let us now consider the lattice $X=\ZZ^4$ whose standard basis is denoted by $(e,e',f,f')$. We associate with $\sigma$ and $\tau$ the following matrices:
$$\text{for } \sigma:\begin{pmatrix}
A & 0 \\
I_2-A & -A 
\end{pmatrix} \text{ and for } 
\tau:\begin{pmatrix}
A & 0 \\
0 & A 
\end{pmatrix}
\text{, where }
A=
\begin{pmatrix}
0 & 1 \\
1 & 0 
\end{pmatrix} 
.$$
An immediate block computation shows that this defines a $\Gamma$-module structure on $X$. \linebreak We then deduce a unique $K$-torus $T$ such that $T_*=X$. By construction, $T_{\CC(t)}$ is induced.
\medskip

Let us compute $X_I$. Since $\tau(e)=e'$ and $\tau(f)=f'$, we find that $X_I$ is defined by the relations $\overline{e}=\overline{e'}$ and $\overline{f}=\overline{f'}$, so that $X_I \cong \ZZ^2$ with basis $(\overline{e},\overline{f})$. Its $\Galnr$-action is given by:
\begin{align*}
    \sigma(\overline{e})&=\overline{\sigma(e)}=\overline{e'+f-f'}=\overline{e}\\
    \sigma(\overline{f})&=\overline{\sigma(f)}=\overline{-f'}=-\overline{f}.
\end{align*}
We then deduce that $(X_I)^{\Galnr}\cong\ZZ$ with generator $\overline{e}$.
\medskip

Now let us compute $X^I$. Take $ne+n'e'+mf+m'f'\in X^I$. We have:
$$ne+n'e'+mf+m'f'=\tau(ne+n'e'+mf+m'f')=n'e+ne'+m'f+mf'.$$
Whence $n=n'$ and $m=m'$. Thus $ne+n'e'+mf+m'f'=n(e+e')+m(f+f')$. We then deduce that $X^I\cong \ZZ^2$ and admits a basis given by $(e+e',f+f')$. The action of $\Galnr$ on $X^I$ is given by:
\begin{align*}
    \sigma(e+e')&=\sigma(e)+\sigma(e')=(e'+f-f')+(e-f+f')=e+e'\\
    \sigma(f+f')&=\sigma(f)+\sigma(f')=-f'-f=-(f+f').
\end{align*}
\end{pp}

\begin{prop}\label{2eCEX}
    The $K$-torus $T$ induced over $\CC(t)$ introduced in $\S$\ref{Contexte2eCEX} is such that
    $$2\ZZ\cong\Ima(\kappa_{T_{\Kh}}) \subsetneq ((T_*)_I)^{\Galnr}\cong \ZZ.$$
    Moreover, $T(\Kh)=T(K)\,T(\Kh)^0$ and $T(\Khnr)^0=T(\Khnr)^1$ (i.e., $(T_*)_I$ is free).
\end{prop}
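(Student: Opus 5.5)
We follow the strategy of Proposition~\ref{1erCEX} and work with the data of $\S$\ref{Contexte2eCEX}. There $(X_I)^{\Galnr}\cong\ZZ$ is generated by $\overline{e}$, and $X_I\cong\ZZ^2$ is free. Freeness gives $T(\Khnr)^0=T(\Khnr)^1$ directly, by \cite[Corollary 11.1.6]{kaletha_prasad_2023} as in Lemma~\ref{FactDirectTore}. It then remains to prove two things: $2\overline{e}\in\Ima(\kappa_{T_{\Kh}})$ via an element of $T(K)$, and $\overline{e}\notin\Ima(\kappa_{T_{\Kh}})$.

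\emph{Step 1: $2\overline{e}$ is attained by a rational point.} Consider $(e+e')(t)$. The cocharacter $e+e'$ is fixed by $\sigma$ and by $\tau$, and $t\in K$, so $(e+e')(t)\in T(K)$. Moreover $(e+e')(t)=e(t)\,\tau(e(t))=N_{\Lh/\Khnr}(e(t))$. Diagram~(1) then gives $\kappa(N(e(t)))=\overline{v_{\Lh}\text{-image}}=\overline{2e}=2\overline{e}$, since $v_{\Lh}(t)=2$. Hence $2\ZZ\overline{e}\subset\kappa_{T_{\Kh}}(T(K))$. Once Step~2 shows that the image is exactly $2\ZZ$, this yields $T(\Kh)=T(K)\,T(\Kh)^0$.

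\emph{Step 2: $\overline{e}$ is not in the image.} First we produce a preimage over $\Khnr$. Let $w\colonequals N(e(u))=e(u)e'(-u)\in T(\Khnr)$. By diagram~(1), $\kappa_{T_{\Khnr}}(w)=\overline{e}$. Since $\overline{e}$ is $\Galnr$-invariant, $z\colon\alpha\mapsto w\,\alpha(w)^{-1}$ is a cocycle with values in $T(\Khnr)^0$. As in the proof of Proposition~\ref{1erCEX}, $\overline{e}\in\Ima(\kappa_{T_{\Kh}})$ if and only if $z$ is a coboundary in $T(\Khnr)^0$.

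Next we compute the cocycle using the matrix of $\sigma$:
$$\sigma(w)=(e'+f-f')(u)\,(e-f+f')(-u).$$
A direct computation then gives
$$w\,\sigma(w)^{-1}=(e+e')(-1)\,(f+f')(-1)$$
(up to the harmless sign conventions in $\CC^\times$). This element lies in the component $X^I\otimes\CC^\times$ of decomposition~(2).

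Now suppose $z$ is a coboundary, say $z(\alpha)=y\,\alpha(y)^{-1}$ with $y\in T(\Khnr)^0$. Write $y=y_0y_1$ according to decomposition~(2), and $y_0=(e+e')(x)\,(f+f')(x')$ with $x,x'\in\CC^\times$. Comparing components in $X^I\otimes\CC^\times$, and using $\sigma(e+e')=e+e'$ and $\sigma(f+f')=-(f+f')$, we get
$$(e+e')(-1)\,(f+f')(-1)=(e+e')\bigl(x/\overline{x}\bigr)\,(f+f')\bigl(|x'|^2\bigr).$$
The $(f+f')$-coordinate forces $|x'|^2=-1$, a contradiction. So $z$ is not a coboundary, and since the image contains $2\ZZ\overline{e}$ but not $\overline{e}$, we get $\Ima(\kappa_{T_{\Kh}})=2\ZZ$.

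The main obstacle is Step~2. Two points need care there: computing $w\,\sigma(w)^{-1}$ exactly, so that it has no $U^1$-component and its $(f+f')$-coordinate really is $-1$; and justifying that coordinates can be compared in $X^I\otimes\CC^\times\cong\CC^\times\times\CC^\times$. The latter holds because $X^I$ is free with basis $(e+e',f+f')$ and $\sigma$ acts diagonally on this basis.
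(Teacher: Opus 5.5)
Your proof is correct and follows the paper's argument. The paper takes $N(e(iu))$ rather than your $N(e(u))$; this removes the factor $(e+e')(-1)$, so its cocycle is just $(f+f')(-1)$. Your cocycle $(e+e')(-1)\,(f+f')(-1)$ is exactly right, so the sign hedge is unnecessary. Its extra factor is harmless, because $x/\overline{x}=-1$ is solvable (e.g.\ $x=i$), and in both versions the contradiction comes from $|x'|^2=-1$ on the $(f+f')$-coordinate. Writing $y_0=(e+e')(x)\,(f+f')(x')$ and keeping the $(e+e')$-component is in fact slightly more careful than the paper's assertion that $y_0\in\ZZ(f+f')\otimes\CC^\times$.
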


\begin{proof}
Let us resume the context of $\S$\ref{Contexte2eCEX}. Set $l = e(iu)$ and compute $N(l)$. We find:
$$N(l)= l \tau(l) = e(iu)\left(\tau(e)(\tau(iu))\right)=e(iu)e'(-iu)=(e+e')(iu)e'(-1).$$
Furthermore, the commutativity of diagram~(1) shows that ${\kappa_{T_{\Khnr}}}(N(l))=\overline{e} \in (X_I)^{\Galnr}$, since $v_{\Lh}(iu)=1$. Let us show that $\overline{e}$ is not in the image of $\kappa_{T_{\Kh}}$.

Consider the cocycle $z:\alpha \mapsto N(l) \alpha(N(l))^{-1}$ with values in $T(\Khnr)^0$ (since ${\sigma(\overline{e})=\overline{e}}$). \linebreak This cocycle is a coboundary if and only if $\overline{e}\in \Ima(\kappa_{T_{\Kh}})$. Note also that:
$$\sigma(N(l))=\sigma((e+e')(iu)e'(-1))=\sigma(e+e')(-iu)\sigma(e')(-1)=(e+e')(-iu)(e-f+f')(-1).$$
Consequently:
\begin{align*}
    N(l)\sigma(N(l))^{-1}&=(e+e')(iu)e'(-1)\left((e+e')(-iu)(e-f+f')(-1)\right)^{-1} \\
    &=e'(-1)(e+e')(-1)(e-f+f')(-1) = (2e+2e'-f+f')(-1) = (f+f')(-1)\in X^I \otimes \CC^\times.
\end{align*}

For the sake of contradiction, suppose that $z$ is a coboundary. It can therefore be written as $\alpha \mapsto y \alpha(y)^{-1}$ with $y\in T(\Khnr)^0$. Let us use the decomposition~(2) to write $y=y_0u$. We then have $y \sigma(y)^{-1}=(y_0 \sigma(y_0)^{-1}) (u \sigma(u)^{-1})$.

By identifying the terms in the decomposition, we obtain $(f+f')(-1)=y_0\sigma(y_0)^{-1}$. Furthermore, since $\ZZ(e+e')$ and $\ZZ(f+f')$ are stable under $\sigma$, and since $(e+e',f+f')$ is a basis of $X^I$, we have $y_0\in \ZZ (f+f') \otimes \CC^\times$. We can thus write $y_0=(f+f')(x)$ for $x\in \CC^\times$. Whence finally:
$$(f+f')(-1)=y_0 \sigma(y_0)^{-1}=(f+f')(x) \sigma(f+f')(\sigma(x)^{-1})=(f+f')(x)(f+f')(\sigma(x))=(f+f')(|x|^2).$$
We then deduce that $|x|^2 = -1$. This is a contradiction! Therefore, $z$ is not a coboundary and $\overline{e}\not \in \Ima(\kappa_{T_{\Kh}})$.

However, $2 \overline{e} \in \Ima(T(K)\rightarrow (X_I)^{\Galnr})$. Indeed, $N(e(t))=(e+e')(t)\in T(K)$ and $v_{\Lh}(t)=2$. It follows that $\Ima(T(K)\rightarrow (X_I)^{\Galnr})=\Ima(\kappa_{T_{\Kh}})$, that is to say $T(\Kh)=T(K)\,T(\Kh)^0$.
\end{proof}

\begin{rema}
    Let $T$ be a $\Kh$-torus and $\mathcal{T}$ its Néron model. We have:
    \begin{align*}
        \coker(\kappa_T)&\overset{\sim}{\to}\Ker\left(H^1(\Galnr,T(\Khnr)^0)\to H^1(\Galnr,T(\Khnr)\right)\\
        {} & \overset{\sim}{\to}\Ker\left(H^1(\Galnr,T(\Khnr)^0)\to H^1(\Gal(\Kh^\mathrm{sep}/\Kh),T(\Kh^\mathrm{sep})\right)\overset{\sim}{\to}\Ker\left(H^1(\Oc_{\Kh},\mathcal{T}^0)\to H^1(\Kh,T)\right).
    \end{align*}
    
    The first isomorphism arises from the exact sequence in cohomology induced by $\kappa_{T_{\Khnr}}$, the second comes from the inflation-restriction exact sequence~\cite[I.\S5.8.a)]{CohoGalois}, and the last one is obtained via~\cite[2.9.2.(2)]{GilleSemisimpleSchemas} and \cite[XXIV, Proposition 8.1.(i)]{SGA3}.

    Consequently, our two previous counterexamples also induce a counterexample to a Grothendieck--Serre type problem for Néron models of tori.
\end{rema}

\section{Birational properties}\label{SectionBirat}

In this section, $K$ is no longer necessarily a global field. We prove the following theorem:

\begin{theo}
    Let $T$ be a $K$-torus, and let $1\rightarrow S \rightarrow P \rightarrow T \rightarrow 1$ be a flasque resolution of $T$. Let $(S_*)_{I,\mathrm{free}}$ denote the quotient of $(S_*)_{I}$ by its torsion part. We have the following natural isomorphisms:
    $$\coker(\kappa_{T_{K_\Sigma}}) \overset{\sim}{\rightarrow} \coker \left(H^1(K_\Sigma,S)\rightarrow H^1(\Gamma^\mathrm{unr},(S_*)_{I,\mathrm{free}}) \right),$$
    $$\coker(T(K)\rightarrow ((T_*)_I)^{\Galnr}) \overset{\sim}{\rightarrow} \coker \left(H^1(K,S)\rightarrow H^1(\Gamma^\mathrm{unr},(S_*)_{I,\mathrm{free}}) \right),\text{\hspace{2pt} and }$$
    $$T(K_\Sigma)/T(K)\, T(K_\Sigma)^0 \overset{\sim}{\rightarrow} \coker \left(H^1(K,S)\rightarrow \Ima(H^1(K_\Sigma,S)\rightarrow H^1(\Gamma^\mathrm{unr},(S_*)_{I,\mathrm{free}}) \right).$$
    In particular, these three quantities are stable birational invariants of $T$.
\end{theo}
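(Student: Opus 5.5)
The plan is to apply the Kottwitz morphism functorially to the flasque resolution $1\to S\to P\to T\to 1$ and compare the resulting long exact sequences. Since $P$ is quasi-trivial, $(P_*)_I$ is free, hence $(T_*)_I$ is the cokernel of $(S_*)_I\to(P_*)_I$. Because $\pi_1$ is exact, coinvariants under $I$ give the exact sequence
\[
H_1(I,P_*)=0\to H_1(I,T_*)\to (S_*)_I\to (P_*)_I\to (T_*)_I\to 0 .
\]
Here $H_1(I,P_*)=0$ because $P_*$ is a permutation module. The image of the torsion group $H_1(I,T_*)$ is exactly the torsion of $(S_*)_I$, since $(P_*)_I$ is free. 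This yields a short exact sequence of $\Galnr$-modules
\[
0\to (S_*)_{I,\mathrm{free}}\to (P_*)_I\to (T_*)_I\to 0 .
\]
Taking $\Galnr$-invariants produces a connecting map
\[
\delta\colon ((T_*)_I)^{\Galnr}\to H^1(\Galnr,(S_*)_{I,\mathrm{free}}).
\]
Its successor term $H^1(\Galnr,(P_*)_I)$ vanishes, because $(P_*)_I$ is a permutation $\Galnr$-module ($\Galnr$ is procyclic, and $H^1$ of an induced module $\ZZ[\Galnr/U]$ with $U$ open equals $H^1(U,\ZZ)=\mathrm{Hom}(U,\ZZ)=0$). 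Hence $\delta$ induces an isomorphism
\[
\coker\big((P_*)_I^{\Galnr}\to ((T_*)_I)^{\Galnr}\big)\cong H^1(\Galnr,(S_*)_{I,\mathrm{free}}).
\]

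Next I compare this with the field-theoretic side. The map $H^1(K_\Sigma,S)\to H^1(\Galnr,(S_*)_{I,\mathrm{free}})$ should be defined as follows. By inflation--restriction and Steinberg, $H^1(K_\Sigma,S)=H^1(\Galnr,S(K_\Sigma^\mathrm{unr}))$. We then push forward along
\[
S(K_\Sigma^\mathrm{unr})\xrightarrow{\kappa}(S_*)_I\to(S_*)_{I,\mathrm{free}},
\]
which is $\Galnr$-equivariant. Consider the commutative diagram linking
\[
P(K_\Sigma)\to T(K_\Sigma)\to H^1(K_\Sigma,S)\to H^1(K_\Sigma,P)=0
\]
(Hilbert 90 for quasi-trivial tori) with the invariant sequence above, via $\kappa_P$, $\kappa_T$ and this map on $H^1$. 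The compatibility of $\kappa$ with connecting homomorphisms is checked by lifting over $K_\Sigma^\mathrm{unr}$, using surjectivity of $P(K_\Sigma^\mathrm{unr})\to T(K_\Sigma^\mathrm{unr})$ from Steinberg and functoriality of $\kappa_{(-)_{K_\Sigma^\mathrm{unr}}}$. Since $\kappa_P$ is surjective for quasi-trivial tori (reduce to $\GG_m$ over finite extensions, using the valuation), a diagram chase gives
\[
\coker(\kappa_T)\cong\coker\big(H^1(K_\Sigma,S)\to H^1(\Galnr,(S_*)_{I,\mathrm{free}})\big),
\]
which is the first isomorphism.

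For the second isomorphism, run the same chase with
\[
P(K)\to T(K)\to H^1(K,S)\to H^1(K,P)=0.
\]
Because $P(K)\to (P_*)_I^{\Galnr}$ is still surjective (choose elements of $K$, resp. of finite extensions of $K$, with prescribed valuation at each $v\in\Sigma$, by weak approximation for quasi-trivial tori), the image of $T(K)$ in $((T_*)_I)^{\Galnr}$ is the preimage under $\delta$ of the image of $H^1(K,S)$. For the third, note that $T(K_\Sigma)/T(K)T(K_\Sigma)^0=\Ima(\kappa_T)/\kappa_T(T(K))$. Applying $\delta$, which is injective modulo the image of $P$, identifies this with the image of $H^1(K_\Sigma,S)$ modulo the image of $H^1(K,S)$.

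Stable birational invariance then follows because $S$ is determined by $T$ up to multiplication by quasi-trivial tori, and stably birational tori have flasque modules that are similar (Colliot-Thélène--Sansuc). A quasi-trivial $Q$ contributes $H^1(K,Q)=0$ and $H^1(\Galnr,(Q_*)_{I,\mathrm{free}})=0$, so all three cokernels are unchanged. The main obstacle I expect is the compatibility of the connecting maps: showing that the Kottwitz morphisms commute, up to sign, with the boundaries $T(K_\Sigma)\to H^1(K_\Sigma,S)$ and $\delta$. This needs care with the torsion quotient, since $\kappa_S$ lands in $(S_*)_I$ rather than in its free quotient. I would handle it with explicit cocycles: for $t\in T(K_\Sigma)$, lift it to $p\in P(K_\Sigma^\mathrm{unr})$. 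The cocycle $\gamma\mapsto p^{-1}\gamma(p)\in S(K_\Sigma^\mathrm{unr})$ has Kottwitz image $\gamma\kappa(p)-\kappa(p)$, which is exactly the cocycle computing $\delta(\kappa_T(t))$.
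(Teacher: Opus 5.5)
Your proposal is correct and follows the paper's own proof: you use the short exact sequence $0\to (S_*)_{I,\mathrm{free}}\to (P_*)_I\to (T_*)_I\to 0$, the vanishing of $H^1$ on the permutation terms, surjectivity of $\kappa_P$ (and of $P(K)\to ((P_*)_I)^{\Galnr}$ by density), a diagram chase, and invariance under $S\mapsto S\times Q$; your explicit definition of the vertical map via $\kappa_S$, together with the cocycle check of compatibility with $\delta$, usefully fills in what the paper leaves as ``a diagram chase''. One harmless slip: $H_1(I,P_*)$ need not vanish for a permutation module (for $P=\GG_m$ it is the abelianized inertia quotient), but your argument only needs $H_1(I,T_*)$ to be torsion and $(P_*)_I$ to be free, so nothing is affected.
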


\begin{proof}
    Observe that we have the exact sequence:
    \[\begin{tikzcd}
        H_1(I,T_*) & {(S_*)_I} & {(P_*)_I} & {(T_*)_I} & 1.
        \arrow[from=1-1, to=1-2]
        \arrow[from=1-4, to=1-5]
        \arrow[from=1-2, to=1-3]
        \arrow[from=1-3, to=1-4]
    \end{tikzcd}\]
    Since $(P_*)_I$ is free and $H_1(I,T_*)$ is torsion, it immediately follows:
    \[\begin{tikzcd}
        1 & {(S_*)_{I,\mathrm{free}}} & {(P_*)_I} & {(T_*)_I} & 1.
        \arrow[from=1-1, to=1-2]
        \arrow[from=1-4, to=1-5]
        \arrow[from=1-2, to=1-3]
        \arrow[from=1-3, to=1-4]
    \end{tikzcd}\]
    Note that $(P_*)_I$ is a permutation module since $P_*$ is. Furthermore, $\kappa_{P_{K_\Sigma}}$ is surjective, as this holds for $\GG_m$. We then obtain the following diagram with exact rows:
    \[\begin{tikzcd}
         {...} & {P(K_\Sigma) } & {T(K_\Sigma) } & 
    {H^1(K_\Sigma,S)} & {H^1(K_\Sigma,P)=1} \\
         {...} & 
    {((P_*)_I)^{\Gamma^{\mathrm{unr}}}} & 
    {((T_*)_I)^{\Gamma^{\mathrm{unr}}}} & 
    {H^1(\Gamma^{\mathrm{unr}},(S_*)_{I,\mathrm{free}})} & 
    {H^1(\Gamma^{\mathrm{unr}},(P_*)_{I})=1}
        \arrow[from=1-1, to=1-2]
        \arrow[from=1-2, to=1-3]
        \arrow[from=1-2, to=2-2]
        \arrow[from=1-3, to=1-4]
        \arrow["{\kappa_{P_{K_\Sigma}}}", two heads, from=1-2, to=2-2]
        \arrow[from=1-4, to=1-5]
        \arrow["{\kappa_{T_{K_\Sigma}}}", from=1-3, to=2-3]
        \arrow[from=1-4, to=2-4]
        \arrow[from=2-1, to=2-2]
        \arrow[from=2-2, to=2-3]
        \arrow[from=2-3, to=2-4]
        \arrow[from=2-4, to=2-5]
    \end{tikzcd}\]
    A diagram chase then yields the first isomorphism.
    \medskip

    Moreover, $P(K)\rightarrow (P_*)_{I}$ is also surjective because $P(K)$ is dense in $P(K_\Sigma)$. Replacing $K_\Sigma$ by $K$ in the first row of the previous diagram, the same diagram chase then yields the second \linebreak isomorphism.
    \medskip

    The last isomorphism is obtained by observing that
    \begin{align*}
        T(K_\Sigma)/T(K)\,T(K_\Sigma)^0 & \overset{\sim}{\rightarrow}(T(K_\Sigma)/T(K_\Sigma)^0)/(T(K)/T(K)\cap T(K_\Sigma)^0)\\
        &\overset{\sim}{\rightarrow} \Ima(\kappa_{T_{K_\Sigma}})/\Ima(T(K)\rightarrow (T_*)_I)\\
        &\overset{\sim}{\rightarrow}\coker(T(K)\rightarrow (T_*)_I)/\coker(\kappa_{T_{K_\Sigma}})\\
        &\overset{\sim}{\rightarrow}  \coker \left(H^1(K,S)\rightarrow H^1(\Gamma^\mathrm{unr},(S_*)_{I,\mathrm{free}}) \right)/\coker \left(H^1(K_\Sigma,S)\rightarrow H^1(\Gamma^\mathrm{unr},(S_*)_{I,\mathrm{free}}) \right)\\
        &\overset{\sim}{\rightarrow}\coker \left(H^1(K,S)\rightarrow \Ima(H^1(K_\Sigma,S)\rightarrow H^1(\Gamma^\mathrm{unr},(S_*)_{I,\mathrm{free}}) \right).
    \end{align*}
    
    Finally, observe that these three quantities are invariant if we replace $S$ by $S\times P_0$, where $P_0$ is a quasi-trivial torus. We then conclude that these quantities are stable birational invariants of $T$ according to~\cite[\S 4]{VoskresenskiiBook}.
\end{proof}

\section*{Acknowledgments}

The author thanks Philippe Gille and Ralf Köhl for their support, their guidance and their reading of the present article.

The author is also grateful to the Studienstiftung des deutschen Volkes for financially supporting this project. He was also supported by the project ''Group schemes, root systems, and related representations'' founded by the European Union - NextGenerationEU through Romania’s National Recovery and Resilience Plan (PNRR) call no. PNRR-III-C9-2023-I8, Project CF159/31.07.2023, and coordinated by the Ministry of Research, Innovation and Digitalization (MCID) of Romania. 

\printbibliography

\end{document}